\definecolor{bluegreen2}{RGB}{76, 134, 180,}
\definecolor{green3}{RGB}{1, 25, 16} 
\definecolor{ashgrey}{RGB}{47,79,79}
\theoremstyle{plain}
\newtheorem{corollary}{Corollary}[section]
\newtheorem{definition}{Definition}[section]
\newtheorem{example}{Example}[section]
\newtheorem{proposition}{Proposition}[section]
\newtheorem{remark}{Remark}[section]
\newtheorem{theorem}{Theorem}[section]
\numberwithin{equation}{section}
\definecolor{lightgrey}{cmyk}{0,0,0,0.30}
\definecolor{darkgrey}{cmyk}{0,0,0,0.70}
\definecolor{purple}{cmyk}{0.45,0.86,0,0}
\definecolor{darkblue}{cmyk}{1.7,0.7,0.3,0}
\definecolor{lightblue}{cmyk}{0.6,0.2,0.1,0}
\definecolor{midblue}{cmyk}{1.1,0.3,0.2,0}
\begin{document}	
	\title[A new class of $p$-adic Lipschitz functions and Hensel's lemma]{A new class of $p$-adic Lipschitz functions and multidimensional Hensel's lemma}
	
	\author[F. Bolivar-Barbosa]{Fausto Bolivar-Barbosa}
	\address{Departamento de Matem\'aticas.
		Universidad Nacional de Colombia. Ciudad Universitaria. Carrera 30 Calle 45. Edificio 404. Bogot\'a, Colombia.
	} 	
	\email{fcbolivarb@unal.edu.co}
	
	\author[E. Le\'{o}n-Cardenal]{Edwin Le\'{o}n-Cardenal}
	\address{CONACYT -- Centro de Investigaci\'{o}n en Matem\'{a}ticas, 
		Unidad Zacatecas.
		Quantum Ciudad del Conocimiento.
		Avenida Lasec, Andador Galileo Galilei,
		Manzana 3 Lote 7. C.P. 98160.
		Zacatecas, ZAC.
		M\'{e}xico.}
	\email{edwin.leon@cimat.mx}
	\thanks{E. León Cardenal is partially supported by CONACYT Grant No. 286445.}
	
	\author[J.J. Rodr\'iguez-Vega]{John Jaime Rodr\'iguez-Vega}
	\address{Departamento de Matem\'aticas.
		Universidad Nacional de Colombia. Ciudad Universitaria. Carrera 30 Calle 45. Edificio 404. Bogot\'a, Colombia.
	}	
	\email{jjrodriguezv@unal.edu.co}
	\date{}
	
	\subjclass[2000]{Primary 37A44; Secondary 11S82,11K41,46S10}
	
	\keywords{Higher dimensional $p$-adic analysis, Lipschitz functions, van der Put basis, Hensel's Lemma, non-Archimedean dynamics.}

	\begin{abstract}
		In this work we study $p$-adic continuous functions in several variables taking values on $\mathbb{Z}_p$. We describe the orthonormal van der Put base of these functions and study various Lipschitz conditions in several variables, generalizing previous work of Anashin. In particular, we introduce  a new class of $p$-adic Lipschitz functions and study some of their properties. We also prove a  Hensel's lifting lemma for this new class of functions, generalizing previous results of Yurova and Khrennikov.
	\end{abstract}
	\maketitle
	\section{Introduction}
	The theory of non-Archimedean dynamical systems has been studied intensively in recent years, not only from the theoretical point of view but also from a practical one. Examples of non-Archimedean local fields include $\mathbb{Q}_p$, the field of $p$-adic numbers (see Section \ref{Sec2} for a formal definition), and $\mathbb{F}_p((T))$, the field of formal Laurent series with coefficients in the finite field with $p$ elements. There are at least two big courses in the theory of non-Archimedean dynamics. The first one is done over $\mathbb{C}_p$, which is the complete algebraic closure of  $\mathbb{Q}_p$, this wing of the theory also includes dynamics over Berkovich spaces, see the recent book \cite{Be}. The second approach is about the dynamical ergodic theory of $\mathbb{Q}_p$ (or of a finite extension of it), see the also recent book \cite{An-Kh_Book}. In this article we will be concerned with the latter approach. 
	
	Some of the problems in the dynamical ergodic theory of $\mathbb{Q}_p$ can be addressed by understanding the  $\mathbb{Q}_p$-Banach space of continuous functions from $\mathbb{S}$ to $\mathbb{Q}_p$, for $\mathbb{S}\subseteq\mathbb{Q}_p$. Under some assumptions on $\mathbb{S}$, it is possible to define a notion of orthogonality for continuous functions from $\mathbb{S}$ to $\mathbb{Q}_p$. Then we have several choices for `orthogonal' or `orthonormal' bases, being the Mahler and the van der Put bases the main `orthonormal' ones. One may then try to characterize some ergodic properties of continuous functions in terms of the coefficients of a given basis. This approach has proven adequate for the class of $p$-adic Lipschitz functions, for instance it is known the description of the $1$-Lipschitz functions which are measure preserving in terms of the van der Put basis, see  \cite{Yu-Kh_JNT13}. Other characterizations of measure preserving, ergodic and locally scaling functions, are given in e.g. \cites{SchiBook,An94,An02,An,An-Kh_Book,An-Kh-Yu,Yu-Kh_JNT,Je-Li,Je_JNT,Je_PNUAA,Fur,Me_IJNT,Me_Coll}. Some applications of this dynamical ergodic theory to computer science are given in e.g. \cites{An94,An02,An,An-Kh_Book}.
	
	Much less is known in the case of several variables, i.e. when $\mathbb{S}\subseteq\mathbb{Q}_p^n$. It seems that the study of $p$-adic ergodic dynamics in several variables is not equally developed as the one variable case. To the best of our knowledge, the only notable exceptions are the works of Anashin in \cites{An94,An02,An-Kh_Book}. This work attempts to motivate the further development of $p$-adic dynamics in several variables. In this note we take the first step is this direction by studying continuous functions and a class of $p$-adic Lipschitz functions in several $p$-adic variables. Anashin has given in \cites{An94,An02,An-Kh_Book} a straightforward generalization of the Lipschitz condition for the multivariable case, but in the world of several variables there are other possibilities and we report here a new class of $p$-adic Lipschitz functions in several variables depending on several parameters. This Lipschitz condition can be interpreted as a $p$-adic weighted Lipschitz condition with weight $\boldsymbol{\alpha}=(\alpha_1,\ldots,\alpha_n)\in\mathbb{Z}_{\geq0}^n$, see  Definition \ref{Def:New_p_alpha}. We then study the functional subspace of $p$-adic Lipschitz functions in several variables and provide some properties of the coefficients of the van der Put expansion of our new type of Lipschitz functions. As application of our findings we present a version in several variables of the
	Hensel's lifting lemma, proved by Yurova and Khrennikov in \cite{Yu-Kh_JNT}. 
	
	The work is organized as follows. In Section \ref{Sec2} we present the basics of the $p$-adic analysis of continuous functions from $\mathbb{Z}_p$ to $\mathbb{Q}_p$. With the purpose of later use we revisit the Hensel's lifting lemma in \cite{Yu-Kh_JNT}*{Thm 3.3} and provide an equivalent new statement in Theorem \ref{Thm:Hensel_alpha_Univ}. Section \ref{Sec3} contains the main results of this work. 
	We present in Theorem \ref{Thm:vdPut_multi} a description of the orthonormal van der Put base of the $\mathbb{Q}_p$-Banach space of continuous functions from $\mathbb{Z}_p^n$ to $\mathbb{Q}_p$. We then study $p$-adic Lipschitz functions in several variables, giving in Theorem \ref{Thm:alpha_lip_to_vdPut_multivar} some properties of their van der Put coefficients. Finally we prove in Theorem \ref{Thm:Main} a generalization of Hensel's lemma of \cite{Yu-Kh_JNT} for our class of $p$-adic Lipschitz functions in several variables.
	
	We believe that the theory of higher dimensional $p$-adic functions developed here may stimulate future studies on $p$-adic ergodic dynamics. For instance it would be interesting to find analogues of some of the results in \cites{An94,An02,An-Kh_Book} in terms of the van der Put base of Theorem \ref{Thm:vdPut_multi}. One may try also to use orthogonal bases to study other dynamical properties of functions in several variables, including the study of Bernoulli maps, or more generally of locally scaling functions like in \cites{Fur,Je_JNT,Me_Coll}. Finally it is very natural to try to extend the results of \cite{Ka-Sto} to higher dimensional $p$-adic functions.
	\section*{Acknowledgments}
	The authors want to thank to Ana Cecilia Garc\'ia-Lomel\'i for many suggestions and a careful reading of a previous version of this work.
	
	
	\section{Elements of univariate $p$-adic analysis}\label{Sec2}
	\subsection{$p$-adic numbers and $p$-adic functions}
	In this section we summarize some basic aspects of the field of $p$-adic
	numbers, for an in-depth discussion the reader may consult e.g. \cites{SchiBook,RoBook,An-Kh_Book}.
	
	We fix a prime number $p$. Let $x$ be a  non-zero rational number. Then, $x=p^{k}\frac{a}{b}$, with $p\nmid ab$, and $k\in\mathbb{Z}$. The \textit{$p$-adic absolute value} of $x$ is defined as	\[
	|x|_p=
	\begin{cases}
		p^{-k}, & \text{ if }x\neq0,\\
		0, & \text{ if }x=0.
	\end{cases}
	\]
	
	The \textit{$p$-adic distance} over $\mathbb{Q}$ is defined as 
	$d(x,y):=|x-y|_p,$ for $ x,y\in \mathbb{Q}$. The \textit{field of $p$-adic numbers} $\mathbb{Q}_{p}$ is defined as the completion of $\mathbb{Q}$ with respect to the distance $d$. Any
	$p$-adic number $x\neq0$ has a unique representation of the form
	\begin{equation}\label{eq:p_adic_expansion}
		x=p^{\gamma}\sum_{i=0}^{\infty}x_{i}p^{i}, 
	\end{equation}
	where $\gamma=\gamma(x)\in\mathbb{Z},\ x_{i}\in\{0,1,\dots,p-1\},\ x_{0}\neq0$. The integer $\gamma$ is called the \textit{ $p$-adic order of} $x$, and it will be denoted as $ord(x)$. By definition $ord(0)=+\infty$.
	
	A relevant fact about the $p$-adic norm $|\cdot|_p$ is that it is \textit{ultrametric} or \textit{non-Archimedean}, i.e. one has \[
	|x+y|_p\leq\max\{|x|_p,|y|_p\},\quad\text{ for any } x,y\in\mathbb{Q}_p. 
	\]
	
	A basis of open sets for the topology of the metric space $(\mathbb{Q}_p,d)$, is given by the \textit{open balls} $B_{r}(a)$ with
	center $a\in\mathbb{Q}_p$ and radius $p^{r}$ (with $r\in\mathbb{Z}$):
	\[
	B_{r}(a)=\{x\in \mathbb{Q}_p\ :\ |x-a|_p\leq p^r\}.
	\]
	
	The unit ball
	\[\mathbb{Z}_{p}=\{x\in\mathbb{Q}_{p}\ :\ |x|_{p}\leq1\}=\{x\in\mathbb{Q}_{p}\ :\ x=\sum_{i=i_{0}}^{\infty}x_{i}p^{i},i_{0}\geq0\},
	\]
	is a compact set in $(\mathbb{Q}_p,d)$. It is also a local ring with maximal ideal $p\mathbb{Z}_{p}$. The \textit{residue field }of
	$\mathbb{Q}_{p}$ is $\mathbb{Z}_{p}/p\mathbb{Z}_{p}\cong\mathbb{F}_{p}$,  the
	finite field with $p$ elements.
	
	From \eqref{eq:p_adic_expansion} it follows that any $x\in\mathbb{Q}_p$ is a limit of a sequence $\{x^{(n)}\}_{n\in\mathbb{N}}$ of rational numbers
	\[x^{(n)}=p^\gamma(x_0+x_1p+\cdots+x_np^n).\]
	The sequence $\{x^{(n)}\}_{n\in\mathbb{N}}$ is called the \textit{standard sequence}, see \cite{SchiBook}*{Sec. 62}. The standard sequence of an element $x\in\mathbb{Z}_p$ consists of
	non-negative integers and it is eventually constant if $x\in\mathbb{Z}$. For a non-negative integer $m$ and a $p$-adic integer $x$ we will write \[m\vartriangleleft x\] if $m$ is one of the numbers $x^{(0)},x^{(1)},\ldots$. In this case we will say that $m$ is \textit{an initial part of} $x$. The following definition will be used later on.
	\begin{definition}\label{def:m_star}
		If $m\in\mathbb{Z}_{\geq 0}$, then \eqref{eq:p_adic_expansion} takes the form $m=m_0+m_1p+\cdots+m_{s-1}p^{s-1}+m_sp^s$, with $m_s\neq 0$. In this case we take $s=s(m)=\lfloor \log_p m\rfloor$ and set for $s(m)\geq 1$ (equivalently $m\geq p$) \[m^\ast:=m_0+m_1p+\cdots+m_{s-1}p^{s-1}.\]
	\end{definition}
	
	\subsection{Continuous functions and van der Put Bases}
	The notion of continuity over $\mathbb{Q}_p$ is directly borrowed from the classical analysis over $\mathbb{R}$. 
	If $\mathbb{S}\subseteq\mathbb{Q}_p$ then a function $f:\ \mathbb{S} \to \mathbb{Q}_p$ is continuous at $s\in \mathbb{S}$ if for each $\epsilon > 0$ there is a $\delta > 0$ such that $|x - s|_p< \delta$, implies $|f(x)-f(s)|_p<\epsilon$. Something similar is true for the uniform convergence. 
	An important source of examples of continuous functions is given by the locally constant functions.
	\begin{definition}\label{Def:locContUniv}
		A function $f:\ \mathbb{S} \to \mathbb{Q}_p$  is locally constant if for each $s\in \mathbb{S}$ there exists a neighbourhood $U$ of $s$ such that $f$ is constant on $U\cap \mathbb{S}$.
	\end{definition}
	For instance the characteristic function of $\mathbb{Z}_p$ is a locally constant function. 
	
	We shall consider the $\mathbb{Q}_p$-vector space of continuous functions from  $\mathbb{S}\subseteq\mathbb{Q}_p$ to $\mathbb{Q}_p$,  denoted by $C(\mathbb{S}\to \mathbb{Q}_p)$. This set may be consider as a non-Archimedean Banach space by  endowing it with the supremum norm 
	\[||f||_\infty=\sup_{x\in \mathbb{S}}|f(x)|_p\quad\text{for}\quad f\in C(\mathbb{S}\to \mathbb{Q}_p).\]
	Then the set of locally constant functions on $\mathbb{S}$ form a $\mathbb{Q}_p$-linear subspace of $C(\mathbb{S}\to \mathbb{Q}_p)$ and moreover every continuous functions can be uniformly approximated by locally constant functions, see e.g. \cite{SchiBook}*{Thm. 26.2}. In addition, it is possible to define a notion of orthogonality over $C(\mathbb{S}\to \mathbb{Q}_p)$, see \cite{SchiBook}*{Sect. 50}, and one has that there exists an `orthonormal' basis of $C(\mathbb{Z}_p\to \mathbb{Q}_p)$ formed by locally constant functions, the van der Put Basis, which is described as follows. 
	\begin{theorem}[\protect{\cite{SchiBook}*{Thm 62.2}}]\label{Thm:vdP_base_univ}
		For $x\in\mathbb{Z}_p$ and $m\in\mathbb{Z}_{\geq 0}$, the functions $e_0,e_1,\ldots$ defined by 
		\[
		e_m(x)=
		\begin{cases}
			1, & \text{ if } m\vartriangleleft x,\\
			0, & \text{ otherwise},
		\end{cases}
		\]
		form an `orthonormal' basis (the van der Put basis) of the space $C(\mathbb{Z}_p\to \mathbb{Q}_p)$. If $f: \mathbb{Z}_p\to \mathbb{Q}_p$ is continuous and has the expansion 
		\begin{equation}\label{eq:vderp_univariate}
			f(x)=\sum_{m=0}^{\infty} B_me_m(x),\qquad x\in\mathbb{Z}_p,
		\end{equation}
		then for $m\in\mathbb{Z}_{\geq 0},\quad B_m=\begin{cases}
			f(m)-f(m^\ast), & \text{ if } m\geq p,\\
			f(m), & \text{ otherwise}.
		\end{cases}$
	\end{theorem}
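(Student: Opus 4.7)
The plan is to define the partial sums $S_N f := \sum_{m=0}^{p^N - 1} B_m e_m$ with the candidate coefficients $B_m$ from the statement, establish the pointwise identity $S_N f(x) = f(x^{(N-1)})$ for every $x\in\mathbb{Z}_p$, and then push through the convergence and the `orthonormality' via uniform continuity together with the non-Archimedean triangle inequality. A first preliminary observation is that $e_m$ is the characteristic function of the ball $m + p^{s(m)+1}\mathbb{Z}_p$ for $m\geq 1$ (and of $p\mathbb{Z}_p$ for $m=0$), so $\|e_m\|_\infty = 1$; moreover the relation $m\vartriangleleft x$ depends only on $x$ modulo $p^{s(m)+1}$, hence $S_N f$ is constant on cosets of $p^N\mathbb{Z}_p$ and in particular $S_N f(x) = S_N f(x^{(N-1)})$.

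The heart of the proof is the identity $S_N f(a) = f(a)$ for every integer $0\le a< p^N$, which I would prove by induction on $a$. All initial parts of such an $a$ lie below $p^N$, so $S_N f(a) = \sum_{k\vartriangleleft a} B_k$. If $a < p$ the only initial part is $a$ itself and $B_a = f(a)$. If $a\geq p$, the initial parts of $a$ decompose as $\{a\}$ together with the initial parts of $a^\ast$, hence by the inductive hypothesis
\[
S_N f(a) = B_a + \sum_{k\vartriangleleft a^\ast} B_k = \bigl(f(a) - f(a^\ast)\bigr) + f(a^\ast) = f(a).
\]

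Uniform continuity of $f$ on the compact space $\mathbb{Z}_p$ then closes the argument: given $\epsilon > 0$, pick $N$ so that $|f(x)-f(y)|_p < \epsilon$ whenever $|x-y|_p \leq p^{-N}$. Since $|x - x^{(N-1)}|_p \leq p^{-N}$ we obtain $\|f - S_N f\|_\infty < \epsilon$; applying the same bound to the pair $(m, m^\ast)$, for which $|m - m^\ast|_p = p^{-s(m)}$, gives $|B_m|_p < \epsilon$ as soon as $s(m)\geq N$. This simultaneously shows $B_m\to 0$ and that $\sum_{m\geq 0} B_m e_m$ converges uniformly to $f$. The `orthonormality' then drops out: from the formula one has $|B_m|_p \leq \|f\|_\infty$, while the ultrametric inequality gives $\|f\|_\infty = \|\sum_m B_m e_m\|_\infty \leq \sup_m |B_m|_p$, so $\|f\|_\infty = \sup_m |B_m|_p$ and the expansion is forced to be unique.

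The one place that genuinely requires care is the combinatorial step identifying the initial parts of $a$ with $\{a\} \sqcup \{\text{initial parts of } a^\ast\}$, especially in the degenerate situations where some top digits of $a$ below $m_{s(a)}$ happen to be zero --- so that $s(a^\ast)$ is strictly less than $s(a)-1$, or even $a^\ast = 0$ and one has to reconcile the base case. Past this bookkeeping, the induction and the uniform-continuity wrap-up are completely routine.
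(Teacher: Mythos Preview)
The paper does not prove this statement --- it is quoted from Schikhof's book without proof --- so there is no in-paper argument to compare against directly. Your proof is correct and is the standard one; it is also precisely the $n=1$ specialization of the argument the paper gives for the multivariate generalization in Theorem~\ref{Thm:vdPut_multi}, which likewise telescopes $\sum_{j\vartriangleleft m} A_j$ via the splitting $m\mapsto m^\ast$, appeals to uniform continuity of $F$ for convergence, and verifies $\|F\|_\infty = \sup_{\boldsymbol{m}}|A_{\boldsymbol{m}}|_p$ for orthonormality.
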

	\begin{remark}
		There are several `orthogonal' basis of  $C(\mathbb{Z}_p\to \mathbb{Q}_p)$. Notably the Mahler basis is another important one, see the historical notes and comments in  \cites{RoBook,SchiBook}. 
	\end{remark}
	\subsection{Lipschitz conditions}
	The notion of differentiable functions over $\mathbb{Q}_p$ is also borrowed for the classical analysis over $\mathbb{R}$. However, it is well known that this classical notion is not very useful, see e.g \cite{RoBook}*{Ch. 5, Sec. 1} or \cite{SchiBook}*{Ch. 2, Sec. 26}. Consider for instance, the continuous function $f:\ \mathbb{Z}_p\to \mathbb{Z}_p$ defined by $f(\sum_{i=0}^{\infty}x_{i}p^{i})=\sum_{i=0}^{\infty}x_{i}p^{2i}$. It is not difficult to see that $f$ is differentiable at every point of the domain with derivative identically zero. But $f$ is also injective, so in particular is very far from being locally constant. 
	
	There are several approaches to overcome this feature of $p$-adic analysis, for instance one may play with the definition of differentiability in order to get meaningful properties of $p$-adic functions, see some of these strategies at  \cites{SchiBook,RoBook,An-Kh_Book}. Another path that one may follow is to study some relevant functional subspaces of $C(\mathbb{S}\to \mathbb{Q}_p)$ or subspaces of the set of analytic functions over $\mathbb{Q}_p$. We focus here on the subspace of $C(\mathbb{Z}_p\to \mathbb{Q}_p)$ formed by the functions which satisfy a Lipschitz type condition. 
	\begin{definition}
		Take $\alpha\in\mathbb{Z}_{\geq 0}$. A function $f: \mathbb{Z}_p\to \mathbb{Z}_p$ is called a $p^\alpha-$Lipschitz function if for every $x,y\in\mathbb{Z}_p$,
		\begin{equation}\label{Eq:p_alpha_Lipschitz}
			|f(x)-f(y)|_p\leq p^\alpha|x-y|_p.
		\end{equation}
		The set consisting of all $p^\alpha-$Lipschitz functions is denoted $\mathit{Lip}_{\alpha}$. The set of $1$-Lipschitz functions, i.e. when $\alpha=0$, is denoted $\mathit{Lip}_{0}$.
	\end{definition}
	\begin{example}
		Any polynomial function with coefficients in $\mathbb{Z}_p$ is a $1$-Lipschitz function. The function $f(x)=\frac{x-x^p}{p}$ is a $p$-Lipschitz function.
	\end{example}
	For some functions in the class  $\mathit{Lip}_{\alpha}$ (mainly for those in the class $\mathit{Lip}_{0}$) there have been some characterizations of dynamical properties, many of them in terms of the coefficients of the van der Put basis. For example, in \cite{Yu-Kh_JNT13} it is given the description of the $1$-Lipschitz functions which are measure preserving. Other characterizations of measure preserving, ergodic and locally scaling functions, are given in e.g. \cites{An94,An02,An,An-Kh_Book,An-Kh-Yu,Je-Li,Je_JNT,Je_PNUAA,Fur}. The class $\mathit{Lip}_\alpha$ is characterized in terms of the van der Put expansion \eqref{eq:vderp_univariate}, as follows.
	\begin{proposition}\protect{\cite{Je_JNT}*{Thm. 3.6}}
		If $f(x)=\sum_{m=0}^{\infty} B_me_m(x)$ is a continuous function from $\mathbb{Z}_p$ to $\mathbb{Z}_p$, then $f\in \mathit{Lip}_\alpha$ if and only if for every $m\geq 0$
		\begin{equation}\label{Eq:Coeff_of_p_alpha_Lipschitz}
			|B_m|_p\leq p^{-\lfloor \log_p m\rfloor+\alpha}.
		\end{equation}
	\end{proposition}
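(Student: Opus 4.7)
The plan is to prove both directions directly from the explicit van der Put formula in Theorem~\ref{Thm:vdP_base_univ}, combined with the ultrametric property of $|\cdot|_p$.

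For the forward implication, I would start from $B_m=f(m)-f(m^\ast)$ valid for $m\geq p$. Since $m^\ast$ agrees with $m$ in its first $s=\lfloor\log_p m\rfloor$ digits and differs only in the leading one, we have $|m-m^\ast|_p=p^{-\lfloor\log_p m\rfloor}$. Applying the Lipschitz condition \eqref{Eq:p_alpha_Lipschitz} to the pair $(m,m^\ast)$ immediately yields $|B_m|_p\leq p^{\alpha-\lfloor\log_p m\rfloor}$. For the small range $0\leq m<p$, the coefficient reduces to $B_m=f(m)\in\mathbb{Z}_p$, so $|B_m|_p\leq 1\leq p^\alpha$, which matches \eqref{Eq:Coeff_of_p_alpha_Lipschitz} under the natural convention at $m=0$.

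The reverse implication is the main step. Fix $x\neq y$ in $\mathbb{Z}_p$ and put $p^{-k}=|x-y|_p$, so $x$ and $y$ share their first $k$ $p$-adic digits and differ at position $k$. The key observation is that $e_m(x)-e_m(y)=0$ whenever $\lfloor\log_p m\rfloor<k$: such an $m$ has at most $k$ digits and is therefore an initial part of both $x$ and $y$ or of neither, depending solely on the shared prefix. Consequently, in the expansion
\[
f(x)-f(y)=\sum_{m=0}^{\infty} B_m\bigl(e_m(x)-e_m(y)\bigr),
\]
only indices with $\lfloor\log_p m\rfloor\geq k$ contribute. Applying the hypothesis $|B_m|_p\leq p^{\alpha-\lfloor\log_p m\rfloor}$ together with $|e_m(x)-e_m(y)|_p\leq 1$ and the ultrametric inequality yields $|f(x)-f(y)|_p\leq p^{\alpha-k}=p^{\alpha}|x-y|_p$.

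The cleanest part should be the forward direction, which is essentially one application of \eqref{Eq:p_alpha_Lipschitz}. The main obstacle lies in making the reverse direction rigorous: one must carefully verify the claim that $e_m(x)=e_m(y)$ for every $m<p^k$, which is a bookkeeping argument on the digits of $m$, $x$ and $y$, and also confirm that the ultrametric estimate of the tail $\sum_{m\geq p^k}B_m(e_m(x)-e_m(y))$ is legitimate as the series is uniformly convergent.
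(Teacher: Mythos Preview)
The paper does not supply a proof of this proposition; it is quoted directly from \cite{Je_JNT}*{Thm.~3.6} and left unproved. So there is nothing to compare against in the paper itself.

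That said, your argument is correct and is essentially the standard one. The forward direction is exactly as you say: $|m-m^\ast|_p=p^{-\lfloor\log_p m\rfloor}$ and one application of \eqref{Eq:p_alpha_Lipschitz} finishes it. For the reverse direction, the digit-bookkeeping you flag as the main obstacle is routine: if $|x-y|_p=p^{-k}$ and $m<p^{k}$ with $m=x^{(j)}$, then either $j<k$ (and $x^{(j)}=y^{(j)}$ since the first $k$ digits agree), or $j\geq k$, which forces the digits of $m$ in positions $k,\dots,j$ to vanish, so in fact $m=x^{(k-1)}=y^{(k-1)}$. Either way $m\vartriangleleft y$, and by symmetry $e_m(x)=e_m(y)$. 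The ultrametric tail estimate is legitimate because $B_m\to0$ ensures uniform convergence, so $|f(x)-f(y)|_p\leq\sup_{m\geq p^{k}}|B_m|_p\leq p^{\alpha-k}$ as you wrote.
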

	In such case, one has $b_m^\alpha:=p^{-\lfloor \log_p m\rfloor+\alpha}B_m\in\mathbb{Z}_p$. 
	In particular, see \cite{An-Kh-Yu}*{Thm. 5}, for a continuous function from $\mathbb{Z}_p$ to $\mathbb{Z}_p$, $f(x)=\sum_{m=0}^{\infty} B_me_m(x)$, one has that $f\in \mathit{Lip}_0$ if and only if for every $m\geq 0$ one has $|B_m|_p\leq p^{-\lfloor \log_p m\rfloor}$. In this case $b_m:=p^{-\lfloor \log_p m\rfloor}B_m\in\mathbb{Z}_p$. 
	
	Another interesting feature of the class of $p^\alpha-$Lipschitz functions is its arithmetic nature, which is expressed for instance by the equivalence between \eqref{Eq:p_alpha_Lipschitz} and the following fact:  for $k\geq 1+\alpha$,
	\begin{equation}\label{Eq:p_alpha_equivalence}
		x\equiv y \bmod{p^k} \text{ implies } f(x)\equiv f(y) \bmod{p^{k-\alpha}}.
	\end{equation}
	This is the starting point for the following result.
	\subsection{Hensel's Lifting Lemma}
	Yurova and Khrennikov gave in \cite{Yu-Kh_JNT} a new criterion for lifting roots of $p$-adic continuous functions in the class $\mathit{Lip}_\alpha$, by looking at their van der Put expansion. A compelling fact about this result is that there is no assumption about differentiability (see Example \ref{Ex:Function} below), which is the usual requirement in several equivalent forms of Hensel's lemma, see e.g. \cites{Bor-Sha,Rib}. 
	
	The statement of their results are given in two separated cases, when $\alpha=0$ (see \cite{Yu-Kh_JNT}*{Thm. 2.1, Thm. 2.4}) and when $\alpha>0$ (see \cite{Yu-Kh_JNT}*{Thm. 3.2, Thm. 3.3}). In the latter case, the statement and the proof are given in terms of some sub-functions associated to $f\in\mathit{Lip}_\alpha$. We present below a uniform equivalent statement of these results, not involving sub-functions, but based on observation \eqref{Eq:Coeff_of_p_alpha_Lipschitz}. The proof of such statement is analogous to the proof of \cite{Yu-Kh_JNT}*{Thm. 2.4}, we present here our proof for the sake of completeness and also with the aim of motivate the proof of Theorem \ref{Thm:Main}. 
	
	For any $p$-adic integer $z$, we denote by $\overline{z}^{\mbox{\tiny k}}$ the reduction modulo $p^k$ of $z$. When $k=1$ we will just use $\overline{z}$. For a function $f: \mathbb{Z}_p\to\mathbb{Z}_p$,   $\overline{f}^{\mbox{\tiny k}}$ corresponds to the reduction modulo $p^k$ of all the coefficients of the van der Put expansion of $f$. 
	\begin{theorem}\label{Thm:Hensel_alpha_Univ}
		Let $f: \mathbb{Z}_p\to\mathbb{Z}_p$ be a function in the class $\mathit{Lip}_\alpha$, represented via van der Put series as $f(x)=\sum_{m=0}^{\infty}b_m^{\alpha}p^{\lfloor \log_p m\rfloor-\alpha}e_m(x).$
		\begin{enumerate}
			\item The function $f$ has a root in $\mathbb{Z}_p$ if and only if the equations $\overline{f}^{\mbox{\tiny k}}(x)\equiv 0 \bmod{p^{k-\alpha}},$ are solvable for every $k\geq 1+\alpha$. 
			\item Let $l_0$ be a positive integer and let $z$ be an integer with $0\leq z< p^{l_0+\alpha}$ such that \[f(z)\equiv 0 \bmod{p^{l_0+\alpha}}.\] For any non negative integers $l,m$ with $l_0+\alpha\leq l, m< p^{l}$ and such that $m\equiv z \bmod{p^{l_0+\alpha}}$, assume that 	
			\[\left\{p^{-\alpha}\overline{b^\alpha_{m+rp^l}}\ ;\ r=1,2,\ldots, p-1\right\}=\{1,2,\ldots, p-1\}.\]		
			Then there exists a unique  $\zeta\in \mathbb{Z}_p$ such that $f(\zeta)=0$ and $\zeta\equiv z \bmod{p^{l_0+\alpha}}$.
		\end{enumerate}
	\end{theorem}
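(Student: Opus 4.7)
The plan divides into the two parts, with part (1) a standard compactness argument and part (2) an inductive construction followed by a more delicate uniqueness step.

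For part (1), the forward direction starts from a root $\zeta$ of $f$: setting $x_k := \zeta \bmod p^k$, the Lipschitz bound gives $|f(x_k)|_p \leq p^{\alpha}|x_k - \zeta|_p \leq p^{\alpha - k}$, and since coefficient-wise reduction of the van der Put expansion commutes with evaluation on non-negative integers up to precision $p^k$, this transfers to $\overline{f}^{\mbox{\tiny k}}(x_k) \equiv 0 \pmod{p^{k-\alpha}}$. Conversely, from integer solutions $x_k \in [0, p^k)$ of $\overline{f}^{\mbox{\tiny k}}(x_k) \equiv 0 \pmod{p^{k-\alpha}}$ one recovers $|f(x_k)|_p \leq p^{\alpha - k}$; a convergent subsequence $x_{k_j} \to \zeta$ exists by compactness of $\mathbb{Z}_p$, and continuity of $f$ yields $f(\zeta) = 0$.

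For part (2) I would prove by induction on $k \geq l_0$ the existence of a unique integer $z_k \in [0, p^{k+\alpha})$ with $z_k \equiv z \pmod{p^{l_0+\alpha}}$ and $f(z_k) \equiv 0 \pmod{p^{k+\alpha}}$, with base case $z_{l_0} := z$. In the inductive step, any candidate has the form $z_{k+1} = z_k + r p^{k+\alpha}$ with $r \in \{0, 1, \ldots, p-1\}$. The decisive observation is that $z_k \vartriangleleft z_{k+1}$, so only the single new initial part $z_{k+1}$ itself appears; combined with $\lfloor \log_p z_{k+1} \rfloor = k+\alpha$ when $r \geq 1$, the van der Put series telescopes to
\[
f(z_{k+1}) - f(z_k) = b_{z_{k+1}}^{\alpha}\, p^{k}.
\]
Writing $f(z_k) = p^{k+\alpha} A_k$ with $A_k \in \mathbb{Z}_p$ (from the inductive hypothesis), the target $f(z_{k+1}) \equiv 0 \pmod{p^{k+1+\alpha}}$ reduces to $p^{\alpha} A_k + b_{z_{k+1}}^{\alpha} \equiv 0 \pmod{p^{\alpha+1}}$. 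Invoking the hypothesis at $l = k+\alpha$ and $m = z_k$, the values $v_r := p^{-\alpha}\,\overline{b_{z_k + r p^{k+\alpha}}^{\alpha}}$ for $r = 1, \ldots, p-1$ exhaust $\{1, \ldots, p-1\}$, so a case analysis on $A_k \bmod p$ shows that exactly one $r \in \{0, 1, \ldots, p-1\}$ solves the congruence ($r = 0$ when $A_k \equiv 0$, the unique $r$ with $v_r \equiv -A_k$ otherwise). The Cauchy sequence $(z_k)_k$ converges to $\zeta \in \mathbb{Z}_p$ with $\zeta \equiv z \pmod{p^{l_0+\alpha}}$, and $|f(z_k)|_p \leq p^{-k-\alpha}$ forces $f(\zeta) = 0$.

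The main obstacle is the uniqueness of $\zeta$ within its congruence class. If $\zeta'$ is any root with $\zeta' \equiv z \pmod{p^{l_0+\alpha}}$, the direct Lipschitz bound applied to $\tilde z_k := \zeta' \bmod p^{k+\alpha}$ only gives $f(\tilde z_k) \equiv 0 \pmod{p^k}$, which is too weak to match the $\pmod{p^{k+\alpha}}$ vanishing that pinned down each $z_k$. To close this gap I would expand $\zeta' = z + \sum_{j \geq l_0} r_j\, p^{j+\alpha}$ and apply the same one-term telescoping identity level by level to rewrite $0 = f(\zeta')$ as
\[
f(z) + \sum_{\substack{j \geq l_0 \\ r_j \geq 1}} b_{\tilde z_{j+1}}^{\alpha}\, p^{j} = 0,
\]
where the hypothesis forces each nonzero summand $b_{\tilde z_{j+1}}^{\alpha}$ to have $p$-adic valuation exactly $\alpha$. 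Truncating this identity modulo $p^{k+1+\alpha}$ kills all tail indices $j \geq k+1$ and leaves precisely the congruence $f(\tilde z_{k+1}) \equiv 0 \pmod{p^{k+1+\alpha}}$ that characterized $z_{k+1}$ in the construction; induction on $k$ then gives $\tilde z_k = z_k$ for every $k$, hence $\zeta' = \zeta$.
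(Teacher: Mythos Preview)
Your proof is correct and follows essentially the same lifting strategy as the paper: both use the van der Put identity $f(\hat z + rp^l) - f(\hat z) = B_{\hat z + rp^l} = p^{l-\alpha} b^\alpha_{\hat z + rp^l}$ to reduce the lifting step to a congruence modulo $p$ that the hypothesis on $\{p^{-\alpha}\overline{b^\alpha_{m+rp^l}}\}$ solves uniquely. Where you go further is in the uniqueness argument: the paper simply asserts uniqueness ``by the continuity of $f$'' and tacitly assumes $t\in\mathbb{Z}_p^\times$ at each step, whereas you treat the case $r=0$ explicitly and give a genuine proof that any other root $\zeta'$ in the same residue class has truncations satisfying $f(\tilde z_{k+1})\equiv 0\pmod{p^{k+1+\alpha}}$ via the telescoping identity and the exact valuation $\mathrm{ord}_p(b^\alpha_{\tilde z_{j+1}})=\alpha$ forced by the hypothesis---this is a real improvement in rigor over the paper's terse justification.
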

	\begin{proof}
		The first part is an easy variation of the corresponding proof of \cite{Yu-Kh_JNT}*{Thm 2.1}. For the second part, 
		we shall show that it is possible to lift the root $z$ of $f$ modulo $p^{l_0+\alpha}$ to a root in $\mathbb{Z}_p$. We start by assuming that \[f(\hat{z}):=f(z+z_{l_0+\alpha+1}p^{l_0+\alpha+1}+\cdots+z_{l-1}p^{l-1})\equiv 0 \bmod{p^l},\] i.e. that $f(\hat{z})=p^l\cdot t$, for some $t\in\mathbb{Z}_p^\times$. Our first task is to find $r\in\{1,\ldots, p -1\}$ such that 
		\begin{equation}\label{Eq:ProofHensel1}
			f(\hat{z}+p^lr)\equiv 0 \bmod{p^{l+1}}.
		\end{equation}
		By Theorem \ref{Thm:vdP_base_univ}, $B_{\hat{z}+p^lr}=f(\hat{z}+p^lr)-f(\hat{z})$, thus \eqref{Eq:ProofHensel1} is reduced to $B_{\hat{z}+p^lr}+f(\hat{z})\equiv 0 \bmod{p^{l+1}}$, which in turn is reduced to $p^{l-\alpha}b_{\hat{z}+p^lr}^\alpha+tp^l\equiv 0 \bmod{p^{l+1}}$. Dividing by $p^{l}$, we get $\overline{p^{-\alpha}b_{\hat{z}+p^lr}^\alpha}+\bar{t}\equiv 0 \bmod{p}$, where we have emphasized the reduction $\mod p$ of the elements involved. This last equation has a unique solution for any $\bar{t}\in\{1,\ldots,p-1\}$ due to the hypothesis on the set $\left\{p^{-\alpha}\overline{b^\alpha_{m+rp^l}}\ ;\ r=1,2,\ldots, p-1\right\}$. Note that the solution of this last equation is equivalent to the solution of the congruence
		$b_{\hat{z}+p^lr}^\alpha+tp^{\alpha}\equiv 0 \bmod{p^{\alpha+1}}$.
		
		The process described above shows that one may construct a sequence \[Z=(z,z+z_{l_0+\alpha+1}p^{l_0+\alpha+1},\ldots, \hat{z},\hat{z}+z_lp^l,\ldots),\] where $f(\hat{z}+z_lp^l)\equiv 0\bmod{p^{l+1}}$ and $\hat{z}+z_lp^l\equiv \hat{z}\bmod{p^l}$. It follows that the sequence $Z$ converges to some $p$-adic integer $\zeta$ and $f(\zeta)=0$, being $\zeta$ unique by the continuity of $f$.
	\end{proof}
	\begin{example}\label{Ex:Function}
		Take $p=7$ and define $f:\ \mathbb{Z}_7\to \mathbb{Z}_7$ by $f(\sum_{i=0}^{\infty}x_{i}7^{i})=-5+\sum_{i=0}^{\infty}7^i(4+7i^3)x_{i}^5$. Then $f\in\mathit{Lip}_0$, is not differentiable at any point of $\mathbb{Z}_7$ but still one may lift the root $5$ to a $7$-adic root of $f$. Here $5$ is the solution to the equation $4x_0^5\equiv 5 \pmod{7}$. 
	\end{example}
	Some recent generalizations of Theorem \ref{Thm:Hensel_alpha_Univ} are given in \cites{Je_PNUAA,Yu-Kh_Izv,Ka-Sto}.
	
	\section{Elements of $p$-adic analysis in several variables}\label{Sec3}
	In this section we will develop some analogues of the results in Section \ref{Sec2} for multivariate functions, i.e. functions $F:\mathbb{Z}_p^n\to\mathbb{Z}_p$. 
	\subsection{$\mathbb{Q}_p^n$ and $p-$adic multivariate functions}
	We extend the $p$-adic norm to $\mathbb{Q}_{p}^{n}$ by taking
	\[||\boldsymbol{x}||_{p}:=\max_{1\leq i\leq n}|x_{i}|_{p},\quad\text{for}\quad\boldsymbol{x}=(x_{1},\dots,x_{n})\in\mathbb{Q}_{p}^{n}.\]
	We define $ord(\boldsymbol{x})=\min\limits_{1\leq i\leq n}\{ord(x_{i})\}$, then $||\boldsymbol{x}||_{p}=p^{-ord(\boldsymbol{x})}$. The metric space 
	$(\mathbb{Q}_{p}^{n},||\cdot||_{p})$ is a separable complete ultrametric space
	(here, separable means that $\mathbb{Q}_{p}^{n}$ contains a countable dense
	subset, which is $\mathbb{Q}^{n}$ ). For $r\in\mathbb{Z}$, we denote by
	\[B_{r}^{n}(\boldsymbol{a})=\{\boldsymbol{x}\in\mathbb{Q}_{p}^{n}\ :\ ||\boldsymbol{x}-\boldsymbol{a}||_{p}\leq p^{r}\}
	\]
	\textit{the ball of radius }$p^{r}$ \textit{with center at} $\boldsymbol{a}=(a_{1},\dots,a_{n})\in\mathbb{Q}_{p}^{n}$, and take $B_{r}^{n}(\boldsymbol{0}):=B_{r}^{n}$. Note that $B_{r}^{n}%
	(\boldsymbol{a})=B_{r}(a_{1})\times\cdots\times B_{r}(a_{n})$, where $B_{r}(a_{i}):=\{x_i\in\mathbb{Q}_{p}\ :\ |x_{i}-a_{i}|_{p}\leq p^{r}\}$ is the one-dimensional ball of radius
	$p^{r}$ with center at $a_{i}\in\mathbb{Q}_{p}$. The ball $B_{0}^{n}$ equals the product of $n$ copies of $B_{0}%
	=\mathbb{Z}_{p}$. We will prefer the notation $\mathbb{Z}_p^n$, since it is also the local ring of $\mathbb{Q}_{p}^{n}$. Note that, as in the one dimensional case, $\mathbb{Z}_p^n$ is a compact set in the topology of
	$(\mathbb{Q}_{p}^{n},||\cdot||_{p})$.
	
	If $\mathbb{S}\subseteq\mathbb{Q}_p^n$ then the concepts of continuity
	and derivative are defined in $\mathbb{S}$ in the same way as in $\mathbb{R}^n$. Again the set of locally constant functions provides a good example of continuous functions in several variables.
	\begin{definition}\label{Def:locContMult}
		A function $F:\ \mathbb{S} \to \mathbb{Q}_p$  is locally constant if for each $\boldsymbol{s}\in \mathbb{S}$ there exists a neighbourhood $U$ of $\boldsymbol{s}$ such that $F$ is constant on $U\cap \mathbb{S}$.
	\end{definition}
	For example, the product of $n$ univariate locally constant functions is a locally constant function of  $\mathbb{Q}_p^n$. If now we denote by 
	$C(\mathbb{S}\to \mathbb{Q}_p)$ the $\mathbb{Q}_p$-vector space of continuous functions from  $\mathbb{S}\subseteq\mathbb{Q}_p^n$ to $\mathbb{Q}_p$, then we may consider it as a non-Archimedean Banach space by defining the norm
	\[||F||_\infty=\sup_{\boldsymbol{x}\in \mathbb{S}}|F(\boldsymbol{x})|_p\quad\text{for}\quad F\in C(\mathbb{S}\to \mathbb{Q}_p).\]
	As in the one dimensional case, the set of locally constant functions on $\mathbb{S}$ form a $\mathbb{Q}_p$-linear subspace of $C(\mathbb{S}\to \mathbb{Q}_p)$ and moreover every continuous functions can be uniformly approximated by locally constant functions,
	more precisely.  
	\begin{proposition}
		Given $\epsilon>0$ and a function $F\in C(\mathbb{S}\to \mathbb{Q}_p)$ ($\mathbb{S}\subseteq\mathbb{Q}_p^n$), there exists a locally constant function $G:\ \mathbb{S} \to \mathbb{Q}_p$ such that $||F-G||_\infty<\epsilon$.  
	\end{proposition}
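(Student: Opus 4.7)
\medskip

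\noindent\textbf{Proof plan.} The statement is the natural $n$-variable analogue of \cite{SchiBook}*{Thm. 26.2}, and the plan is to mimic that argument, replacing open balls of $\mathbb{Q}_p$ by the polydiscs $B^n_r(\boldsymbol{a})=B_r(a_1)\times\cdots\times B_r(a_n)$ introduced at the start of the section. Two ingredients are crucial: pointwise continuity of $F$ together with the metric structure of $(\mathbb{Q}_p^n,\|\cdot\|_p)$, and the ultrametric ball property, namely that two balls in $\mathbb{Q}_p^n$ are either disjoint or one is contained in the other.

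First, I would fix $\epsilon>0$ and use continuity of $F$ to produce, for each $\boldsymbol{s}\in\mathbb{S}$, an integer $r(\boldsymbol{s})\in\mathbb{Z}$ such that
\[
\boldsymbol{x}\in B_{r(\boldsymbol{s})}^n(\boldsymbol{s})\cap\mathbb{S}\ \Longrightarrow\ |F(\boldsymbol{x})-F(\boldsymbol{s})|_p<\epsilon.
\]
The family $\mathcal{C}=\{B_{r(\boldsymbol{s})}^n(\boldsymbol{s}):\boldsymbol{s}\in\mathbb{S}\}$ is then an open cover of $\mathbb{S}$.

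Second, I would refine $\mathcal{C}$ to a pairwise disjoint cover of $\mathbb{S}$. This is where the ultrametric property enters decisively: any two members of $\mathcal{C}$ are either disjoint or one contains the other, so the collection of \emph{maximal} balls in $\mathcal{C}$ (under inclusion) is automatically pairwise disjoint and still covers $\mathbb{S}$, since every $\boldsymbol{s}\in\mathbb{S}$ belongs to $B_{r(\boldsymbol{s})}^n(\boldsymbol{s})$, which is contained in some maximal element. Denote the resulting disjoint cover by $\{B_\lambda\}_{\lambda\in\Lambda}$, and for each $\lambda$ fix one representative $\boldsymbol{s}_\lambda\in B_\lambda\cap\mathbb{S}$.

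Third, I would define $G:\mathbb{S}\to\mathbb{Q}_p$ by $G(\boldsymbol{x})=F(\boldsymbol{s}_\lambda)$ whenever $\boldsymbol{x}\in B_\lambda\cap\mathbb{S}$. Since each $B_\lambda$ is an open neighbourhood of each of its points, the function $G$ is locally constant in the sense of Definition \ref{Def:locContMult}. For $\boldsymbol{x}\in B_\lambda\cap\mathbb{S}$ one has $|F(\boldsymbol{x})-G(\boldsymbol{x})|_p=|F(\boldsymbol{x})-F(\boldsymbol{s}_\lambda)|_p<\epsilon$ by the very choice of $r(\boldsymbol{s}_\lambda)$, hence $\|F-G\|_\infty\leq \epsilon$, and a small adjustment of $\epsilon$ (e.g.\ running the construction with $\epsilon/2$) gives strict inequality.

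The step I expect to be the main delicacy is the extraction of the disjoint refinement, since $\mathbb{S}$ is an arbitrary subset of $\mathbb{Q}_p^n$ and is not assumed compact; a Zorn-type argument is strictly speaking needed to guarantee that every member of $\mathcal{C}$ sits inside a maximal one. In the compact case (e.g.\ $\mathbb{S}\subseteq\mathbb{Z}_p^n$), which is the one relevant for the rest of the paper, this step simplifies drastically: one can replace all $r(\boldsymbol{s})$ by a single integer $r$ using uniform continuity and then simply take the finitely many balls of radius $p^r$ that meet $\mathbb{S}$, which form a partition of $\mathbb{Z}_p^n$.
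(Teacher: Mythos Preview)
Your proposal is correct and follows exactly the route the paper takes: the paper's own proof consists of the single sentence ``the proof of the one dimensional case, i.e.\ the one in \cite{SchiBook}*{Thm.~26.2} can be easily adapted to the present situation,'' and you have carried out precisely that adaptation, in considerably more detail than the paper provides. Your identification of the disjoint-refinement step as the only delicate point (and its trivialisation in the compact case relevant to the rest of the paper) is accurate; the one caveat is that chains in $\mathcal{C}$ need not have upper bounds in $\mathcal{C}$, so the Zorn step is cleaner if phrased via the partition $\{W(\boldsymbol{s})\}$ with $W(\boldsymbol{s})=\bigcup\{B\in\mathcal{C}:\boldsymbol{s}\in B\}$ rather than via maximal balls.
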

	\begin{proof}
		The proof of the one dimensional case, i.e. the one in  \cite{SchiBook}*{Thm. 26.2} can be easily adapted to the present situation.
	\end{proof}
	The van der Put Basis of  $C(\mathbb{Z}_p^n\to \mathbb{Q}_p)$, is formed by $n$-products of one dimensional locally constant functions, in fact, it is formed by products of $n$ elements of the set $\{e_m(x)\}_{m\in\mathbb{Z}_{\geq 0},x\in\mathbb{Z}_p}$ forming the one dimensional van der Put basis.
	\subsection{van der Put basis of $C(\mathbb{Z}_p^n\to \mathbb{Q}_p)$}
	In order to describe the orthonormal van der Put basis of $C(\mathbb{Z}_p^n\to \mathbb{Q}_p)$ we need some technical auxiliary functions. Take  $\boldsymbol{m}=(m_1,\ldots,m_n)\in\mathbb{Z}_{\geq 0}^n$ and define $I(\boldsymbol{m})=\{i_1,\ldots.i_k\}$ as the subset of $\{1,\ldots,n\}$ characterized by
	$i\in I(\boldsymbol{m})$ if and only if $m_i\geq p$. Note that for  $i\in I(\boldsymbol{m})$ it is well defined the operation $m_i^\ast$, while for  $m_{i_{k+1}},\ldots,m_{i_n}$ ( the entries of $\boldsymbol{m}$ which are less than $p$ ) the operation $m_i^\ast$ is not defined. Given a continuous function $F: \mathbb{Z}_p^n\to \mathbb{Q}_p$, we define recursively a family of functions $\{\phi_{1},\ldots,\phi_{k}\}$ as follows.  The function $\phi_{1}$ is defined by
	\[
	\begin{array}{cclc}
		\mathbb{Z}_{\geq 0}^{n-1}&\xrightarrow{\phi_{1}}&\mathbb{Q}_p\\
		(m_1,\ldots,\widehat{m}_{i_{1}},\ldots,m_{n})&\xrightarrow{\phantom{\phi_{1}}} &F(m_1,\ldots,m_{i_{1}},\ldots,m_n)-F(m_1,\ldots,m_{i_{1}}^\ast,\ldots,m_n).
	\end{array}
	\]
	The second function is defined as
	\begin{gather*}
		\phi_2(\ldots,\widehat{m}_{i_{2}},\ldots)=\phi_1(\ldots,\widehat{m}_{i_{1}},\ldots,m_{i_{2}},\ldots)-\phi_1(\ldots,\widehat{m}_{i_{1}},\ldots,m_{i_{2}}^\ast,\ldots)\\
		=F(\ldots,m_{i_{1}},\ldots,m_{i_{2}},\ldots)-F(\ldots,m_{i_{1}}^\ast,\ldots,m_{i_{2}},\ldots)-[F(\ldots,m_{i_{1}},\ldots,m_{i_{2}}^\ast,\ldots)\\
		-F(\ldots,m_{i_{1}}^\ast,\ldots,m_{i_{2}}^\ast,\ldots)].
	\end{gather*} 
	We continue in this fashion defining recursively the functions  $\phi_l$ for $2<l\leq k$, finally the function $\phi_{k}$ is defined by
	\begin{gather*}
		\phi_{k}(\ldots,\widehat{m}_{i_{k}},\ldots)=\phi_{k-1}(\ldots,\widehat{m}_{i_{1}},\ldots,\widehat{m}_{i_{k-1}},\ldots,m_{i_k},\ldots)\\
		-\phi_{k-1}(\ldots,\widehat{m}_{i_{1}},\ldots,\widehat{m}_{i_{k-1}},\ldots,m_{i_k}^\ast,\ldots).
	\end{gather*}

	\begin{theorem}\label{Thm:vdPut_multi}
		For $\boldsymbol{x}=(x_1,\ldots,x_n)\in\mathbb{Z}_p^n$ and $\boldsymbol{m}=(m_1,\ldots,m_n)\in\mathbb{Z}_{\geq 0}^n$, the functions 
		\[E_{\boldsymbol{m}}(\boldsymbol{x})= e_{m_1}(x_1)\cdots e_{m_n}(x_n),
		\]
		form an orthonormal basis (the van der Put basis) of the space $C(\mathbb{Z}_p^n\to \mathbb{Q}_p)$. Here the functions $e_{m_i}(x_i)$ are the functions described in Theorem \ref{Thm:vdP_base_univ}. 
		Moreover, if $F: \mathbb{Z}_p^n\to \mathbb{Q}_p$ is continuous and has the expansion
		\[F(\boldsymbol{x})=\sum_{\boldsymbol{m}\in \mathbb{Z}_{\geq 0}^n}A_{\boldsymbol{m}}E_{\boldsymbol{m}}(\boldsymbol{x}) =\sum_{m_1\geq 0}
		\cdots\sum_{m_n\geq 0} A_{m_{1},\ldots,m_{n}}\ e_{m_1}(x_1)\cdots e_{m_n}(x_n),
		\] 
		then $A_{m_1,\ldots,m_n}=F(m_1,\ldots,m_n),$ when $m_i<p$  for every $i=1,\ldots,n$. In other case and under the above assumptions 
		\begin{equation}\label{Eq:vdPut_thm2}
			A_{m_1,\ldots,m_n}=\phi_{k}(\ldots,\widehat{m}_{i_{k}},\ldots).
		\end{equation} 
		
	\end{theorem}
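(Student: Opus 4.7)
The plan is to proceed by induction on $n$, the base case $n=1$ being Theorem \ref{Thm:vdP_base_univ}. For the inductive step I would regard a continuous $F:\mathbb{Z}_p^n\to\mathbb{Q}_p$ as a continuous map $\tilde{F}:\mathbb{Z}_p\to C(\mathbb{Z}_p^{n-1}\to\mathbb{Q}_p)$ via $\tilde{F}(x_1)(x_2,\ldots,x_n):=F(x_1,\ldots,x_n)$; continuity of $\tilde{F}$ follows from the uniform continuity of $F$ on the compact set $\mathbb{Z}_p^n$. The key point is that Theorem \ref{Thm:vdP_base_univ} remains valid for continuous functions on $\mathbb{Z}_p$ taking values in any non-Archimedean $\mathbb{Q}_p$-Banach space, since its proof invokes only the ultrametric inequality and the uniform density of locally constant functions. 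This yields a unique expansion $\tilde{F}(x_1)=\sum_{m_1\geq 0}C_{m_1}\,e_{m_1}(x_1)$ with $C_{m_1}\in C(\mathbb{Z}_p^{n-1}\to\mathbb{Q}_p)$, $\|C_{m_1}\|_\infty\to 0$, and $\|F\|_\infty=\sup_{m_1}\|C_{m_1}\|_\infty$. Applying the inductive hypothesis to each $C_{m_1}$ produces the multi-index expansion $F(\boldsymbol{x})=\sum_{\boldsymbol{m}}A_{\boldsymbol{m}}E_{\boldsymbol{m}}(\boldsymbol{x})$, and iterating the sup-norm identity gives $\|F\|_\infty=\sup_{\boldsymbol{m}}|A_{\boldsymbol{m}}|_p$, which is the orthonormality assertion.

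For the coefficient formula I would start from the fact that $E_{\boldsymbol{m}'}(\boldsymbol{m})=1$ exactly when $m_i'\vartriangleleft m_i$ for every $i$, so evaluating the expansion at an integer tuple $\boldsymbol{m}\in\mathbb{Z}_{\geq 0}^n$ yields
\[
F(\boldsymbol{m})=\sum_{\boldsymbol{m}'\vartriangleleft\boldsymbol{m}}A_{\boldsymbol{m}'},
\]
where $\boldsymbol{m}'\vartriangleleft\boldsymbol{m}$ stands for $m_i'\vartriangleleft m_i$ for $i=1,\ldots,n$. When every $m_i<p$ one has $I(\boldsymbol{m})=\emptyset$ and the only $\boldsymbol{m}'\vartriangleleft\boldsymbol{m}$ is $\boldsymbol{m}$ itself, so $A_{\boldsymbol{m}}=F(\boldsymbol{m})$. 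In general, for $i\in I(\boldsymbol{m})$ the set of initial parts of $m_i$ decomposes as the disjoint union of the initial parts of $m_i^\ast$ with $\{m_i\}$, while for $i\notin I(\boldsymbol{m})$ it reduces to the singleton $\{m_i\}$; applying inclusion-exclusion restricted to the coordinates in $I(\boldsymbol{m})$ then produces
\[
A_{\boldsymbol{m}}=\sum_{S\subseteq I(\boldsymbol{m})}(-1)^{|S|}F\bigl(\boldsymbol{m}^{(S)}\bigr),
\]
where $\boldsymbol{m}^{(S)}$ is obtained from $\boldsymbol{m}$ by replacing $m_i$ with $m_i^\ast$ for each $i\in S$. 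Unfolding this alternating sum as an iterated one-coordinate telescoping difference applied successively in the coordinates $i_1,i_2,\ldots,i_k$ identifies it with the recursive expression $\phi_k(\ldots,\widehat{m}_{i_k},\ldots)$ defined just before the theorem, which is exactly \eqref{Eq:vdPut_thm2}.

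The main obstacle I expect is the vector-valued analogue of Theorem \ref{Thm:vdP_base_univ} used in the induction: one must verify that $\{e_m\}$ remains an orthonormal Schauder basis after the scalar field $\mathbb{Q}_p$ is replaced by the non-Archimedean Banach space $C(\mathbb{Z}_p^{n-1}\to\mathbb{Q}_p)$, and that the sup-norm of the resulting expansion matches the supremum of the norms of the vector coefficients. This is essentially formal, since the proof of \cite{SchiBook}*{Thm.~62.2} transfers with only cosmetic changes, but it is the only step where care is required to preserve both orthonormality and the explicit identification of the vector coefficients. If one prefers to avoid the vector-valued detour, the alternative is to argue directly that the indicator of any product cylinder $\prod_{i=1}^{n}(m_i+p^{k_i}\mathbb{Z}_p)$ is a finite $\mathbb{Z}$-linear combination of the $E_{\boldsymbol{m}}$ (so that their $\mathbb{Q}_p$-span is dense) and then check orthonormality by using that the supports $B_{\boldsymbol{m}}$ form a family of clopen balls any two of which are either disjoint or nested; the inductive route, however, keeps both assertions most transparent.
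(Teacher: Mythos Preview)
Your argument is correct, but it differs from the paper's proof in both the orthonormality step and the coefficient derivation. The paper does not proceed by induction on $n$ nor invoke a vector-valued van der Put expansion; instead it works directly: evaluating the putative expansion at an integer tuple $\boldsymbol{m}$ gives $F(\boldsymbol{m})=\sum_{j_1\vartriangleleft m_1}\cdots\sum_{j_n\vartriangleleft m_n}A_{j_1,\ldots,j_n}$, and the paper then peels off one summation index at a time by splitting $\sum_{j\vartriangleleft m_i}$ as $\sum_{j\vartriangleleft m_i^\ast}+(\text{term }j=m_i)$, which is precisely how the recursive functions $\phi_1,\ldots,\phi_k$ arise. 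For orthonormality the paper defines $G$ to be the series with those coefficients, checks convergence via uniform continuity of $F$, observes $G=F$ on $\mathbb{Z}_{\geq0}^n$ hence on $\mathbb{Z}_p^n$, and bounds $|A_{\boldsymbol{m}}|_p$ by $\|F\|_\infty$ using the ultrametric inequality on the explicit expression for $A_{\boldsymbol{m}}$.

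Your inductive route via $C(\mathbb{Z}_p^n\to\mathbb{Q}_p)\cong C(\mathbb{Z}_p\to C(\mathbb{Z}_p^{n-1}\to\mathbb{Q}_p))$ is cleaner for the orthonormality assertion, since the sup-norm identity is inherited automatically, whereas the paper has to verify it by hand. Your inclusion--exclusion formula $A_{\boldsymbol{m}}=\sum_{S\subseteq I(\boldsymbol{m})}(-1)^{|S|}F(\boldsymbol{m}^{(S)})$ is a closed-form identification of the coefficient that is equivalent to, and arguably more transparent than, the paper's recursive $\phi_k$; the paper's telescoping derivation, on the other hand, makes the recursive structure of $\phi_k$ explicit and is what later results in the paper (Theorem~\ref{Thm:alpha_lip_to_vdPut_multivar} and Remark~\ref{Rmk1}) exploit. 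The only genuine cost of your approach is the need to check that Theorem~\ref{Thm:vdP_base_univ} holds with values in a non-Archimedean Banach space, which, as you note, is routine.
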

	\begin{proof}
		Note that the definition of  $e_{m_1}(x_1),\ \ldots,\ e_{m_n}(x_n)$ implies
		\[F(m_1,m_2,\ldots,m_n)=\sum_{i_1\vartriangleleft m_1}\sum_{i_2\vartriangleleft m_2}\cdots\sum_{i_n\vartriangleleft m_n} A_{i_{1},i_2,\ldots,i_{n}}.
		\]
		If every $m_i<p$, this implies $A_{m_1,\ldots,m_n}=F(m_1,\ldots,m_n)$. Otherwise, we assume that $I(\boldsymbol{m})=\{i_1,\ldots.i_k\}$, then 
		\[F(m_1,m_2,\ldots,m_n)=\sum_{j_{1}\vartriangleleft m_{i_{1}}}\sum_{j_{2}\vartriangleleft m_{i_{2}}}\cdots\sum_{j_{k}\vartriangleleft m_{i_k}}A_{(\boldsymbol{m},\boldsymbol{j})},\]
		where the vector $(\boldsymbol{m},\boldsymbol{j})$ has the entry $m_i$ for $i\notin I(\boldsymbol{m})$ and has the entry $j_{i}$ for $i\in I(\boldsymbol{m})$. 
		For ease of notation we will assume with out lost of generality that $I(\boldsymbol{m})=\{1,\ldots,k\}$, giving
		\[F(m_1,\ldots,m_k,m_{k+1},\ldots,m_n)=\sum_{j_{1}\vartriangleleft m_{1}}\sum_{j_{2}\vartriangleleft m_{2}}\cdots\sum_{j_{k}\vartriangleleft m_{k}}A_{j_{1},\ldots,j_k,m_{k+1},\ldots,m_{n}}.\]
		We next split the first sum up to $m_{1}^\ast$ and then $m_{1}$, to obtain
		\begin{gather*}
			F(m_1,\ldots,m_k,m_{k+1},\ldots,m_n)=\sum_{j_{1}\vartriangleleft m_{1}^\ast}\sum_{j_{2}\vartriangleleft m_{2}}\cdots\sum_{j_{k}\vartriangleleft m_{k}}A_{j_{1},\ldots,j_k,m_{k+1},\ldots,m_{n}}\\
			+\sum_{j_{2}\vartriangleleft m_{2}}\cdots\sum_{j_{k}\vartriangleleft m_{k}}A_{m_{1},j_2,\ldots,j_k,m_{k+1},\ldots,m_{n}}\\
			=F(m_1^\ast,\ldots,m_k,m_{k+1},\ldots,m_n)+\sum_{j_{2}\vartriangleleft m_{2}}\cdots\sum_{j_{k}\vartriangleleft m_{k}}A_{m_{1},j_2,\ldots,j_k,m_{k+1},\ldots,m_{n}},
		\end{gather*}
		which is equivalent to
		\begin{equation}\label{Eq:Proof_1vdP}
			\begin{gathered}
				F(m_1,\ldots,m_k,m_{k+1},\ldots,m_n)-F(m_1^\ast,\ldots,m_k,m_{k+1},\ldots,m_n)\\=\phi_{1}(\widehat{m}_{{1}},\ldots,m_{n})
				=\sum_{j_{2}\vartriangleleft m_{2}}\cdots\sum_{j_{k}\vartriangleleft m_{k}}A_{m_{1},j_2,\ldots,j_k,m_{k+1},\ldots,m_{n}}.
			\end{gathered}
		\end{equation}
		We now repeat the process and split the first sum in \eqref{Eq:Proof_1vdP} up to $m_{2}^\ast$ and then $m_{2}$, to obtain
		\begin{gather*}
			\phi_{1}(\widehat{m}_{{1}},m_2,\ldots,m_{n})
			=\sum_{j_{2}\vartriangleleft m_{2}^\ast}\cdots\sum_{j_{k}\vartriangleleft m_{k}}A_{m_{1},j_2,\ldots,j_k,m_{k+1},\ldots,m_{n}}\\
			+\sum_{j_{3}\vartriangleleft m_{3}}\cdots\sum_{j_{k}\vartriangleleft m_{k}}A_{m_{1},m_2,j_3,\ldots,j_k,m_{k+1},\ldots,m_{n}}.
		\end{gather*}
		By \eqref{Eq:Proof_1vdP} the first term in the RHS is equal to $\phi_{1}(\widehat{m}_{{1}},m_2^\ast,\ldots,m_{n})$, giving
		\begin{gather*}
			\phi_1(\widehat{m}_{{1}},m_2,\ldots,m_{n})-\phi_{1}(\widehat{m}_{{1}},m_2^\ast,\ldots,m_{n})
			=\phi_{2}(\widehat{m}_{{1}},\widehat{m}_2,m_3,\ldots,m_{n})\\
			=\sum_{j_{3}\vartriangleleft m_{3}}\cdots\sum_{j_{k}\vartriangleleft m_{k}}A_{m_{1},m_2,j_3,\ldots,j_k,m_{k+1},\ldots,m_{n}}.
		\end{gather*}
		After $k-1$ iterations of the process we get
		\begin{gather*}
			\phi_{k-1}(\widehat{m}_{{1}},\ldots,\widehat{m}_{{k-1}},m_k,\ldots,m_{n})=\sum_{j_k\vartriangleleft m_k} A_{m_{1},\ldots,m_{k-1},j_{k},m_{k+1},\ldots,m_n}\\
			=\sum_{j_k\vartriangleleft m_k^\ast} A_{m_{1},\ldots,m_{k-1},j_{k},m_{k+1},\ldots,m_n}+
			A_{m_{1},\ldots,m_{n}},
		\end{gather*}
		which proves \eqref{Eq:vdPut_thm2}. 
		
		We now proceed to show that $\{E_{\boldsymbol{m}}\}_{\boldsymbol{m}\in\mathbb{Z}_{\geq 0}^n}$ is an orthonormal base. We will consider $F(x_1,\ldots,x_n)$ an arbitrary element of $C(\mathbb{Z}_p^n\to \mathbb{Q}_p)$, and consider the series
		\begin{multline*}
			G(x_1,\ldots,x_n):=F(0,\ldots,0)\ e_0(x_1)\cdots e_0(x_n)\\+ \sum_{m_1,\ldots,m_n\in\mathbb{Z}_{\geq 0}} [\phi_{k}(\ldots,\widehat{m}_{i_{1}},\ldots,\widehat{m}_{i_{k}},\ldots)]\ e_{m_1}(x_1)\cdots e_{m_n}(x_n), 
		\end{multline*}	
		where there is at least one positive index. The uniform continuity of $F(x_1,\ldots,x_n)$ implies that 
		\[\lim_{m_1,\ldots,m_n\to \infty} \phi_{k}(\ldots,\widehat{m}_{i_{1}},\ldots,\widehat{m}_{i_{k}},\ldots)=0,\] 
		which implies that $G(x_1,\ldots,x_n)$ converges uniformly, i.e. $G\in C(\mathbb{Z}_p^n\to \mathbb{Q}_p)$. Now, from the definition of  $G$ we have 
		\[G(m_1,\ldots,m_n)=F(m_1,\ldots,m_n),\quad\text{for}\quad (m_1,\ldots,m_n)\in\mathbb{Z}_{\geq 0}^n,
		\]
		and then by continuity $G(x_1,\ldots,x_n)=F(x_1,\ldots,x_n)$. This shows that $\{E_{\boldsymbol{m}}\}_{\boldsymbol{m}\in\mathbb{Z}_{\geq 0}^n}$ is a generating set for $C(\mathbb{Z}_p^n\to \mathbb{Q}_p)$. Finally we shall show that $\{E_{\boldsymbol{m}}\}$ is an orthonormal set. To do so, note that, clearly $||F||_{\infty}\leq \sup_{\boldsymbol{m}}|A_{\boldsymbol{m}}|_p$. On the other hand, the ultrametric property implies that
		\begin{gather*}
			|A_{\boldsymbol{m}}|_p\leq \max\{|F(m_1,\ldots,m_n)|_p,|F(m_1,\ldots,m_{i_1}^\ast,\ldots,m_n)|_p,\ldots,\\
			|F(m_1,\ldots,m_{i_k}^\ast,\ldots,m_n)|_p,\ldots,|F(m_1,\ldots,m_{i_1}^\ast,\ldots,m_{i_2}^\ast,\ldots,m_n)|_p,\ldots,\\
			|F(m_1,\ldots,m_{i_1}^\ast,\ldots,m_{i_k}^\ast,\ldots,m_n)|_p\}\leq ||F||_{\infty}.
		\end{gather*}
		We conclude that $||F||_{\infty}=\sup_{\boldsymbol{m}}|A_{\boldsymbol{m}}|_p$, which completes the proof.
	\end{proof}
	\begin{remark}\label{Rmk1}
		\begin{enumerate}
			\item The first part of Theorem \ref{Thm:vdPut_multi} is proposed in \cite{SchiBook}*{Ex. 62D, pg. 192} as an exercise to the reader in the case of two variables. Our proof of this first part is inspired on the  proof of  \cite{SchiBook}*{Thm. 62.2}. The proof of the second part of Theorem \ref{Thm:vdPut_multi}, i.e. the determination of the coefficients, is our own contribution.
			\item Note that there are several equivalent choices for the auxiliary functions $\phi_l$. For instance we could have chosen as $\Phi_{1}$ the function
			\[	(m_1,\ldots,\widehat{m}_{i_{n}},\ldots,m_{n})\mapsto F(m_1,\ldots,m_{i_{n}},\ldots,m_n)-F(m_1,\ldots,m_{i_{n}}^\ast,\ldots,m_n),\]
			and then the subsequent $\Phi_l$ accordingly. The only change in our proof would have been the splitting step, beginning with the last sum each time.  
		\end{enumerate}
	\end{remark}
	
	\begin{example}\label{Ex:Coeffs}
		Let us illustrate the nested sequence of differences defining the coefficients of the van der Put expansion of Theorem \ref{Thm:vdPut_multi}. We take $n=4$ and 
		$(m_1,m_2,m_3,m_4)=(i,j,k,l)$, we further assume that $I(m_1,m_2,m_3,m_4)=\{i,j,k\}$. Then	
		\begin{gather*}
			A_{ijkl}=\phi_{3}(l)=F(i,j,k,l)-F(i^\ast,j,k,l)-[F(i,j^\ast,k,l)-F(i^\ast,j^\ast,k,l)]\\-[F(i,j,k^\ast,l)-F(i^\ast,j,k^\ast,l)]+[F(i,j^\ast,k^\ast,l)-F(i^\ast,j^\ast,k^\ast,l)].
		\end{gather*}
	\end{example}
	\subsection{Multivariate $p$-adic Lipschitz conditions}
	Differentiable functions over $\mathbb{Q}_p^n$ inherit some of the `bad' properties of their one dimensional counterpart, and here again one may consider for $\mathbb{S}\subseteq\mathbb{Q}_p^n$ the subspace of $C(\mathbb{S}\to \mathbb{Q}_p)$ formed by functions verifying a Lipschitz type condition. It seems that this type of conditions where first given for functions of several variables by Anashin in \cite{An94}.  
	\begin{definition}\protect{\cite{An-Kh_Book}*{Def. 3.25}}
		Take $\beta\in\mathbb{Z}_{\geq 0}$. A continuous function $F: \mathbb{Z}_p^n\to \mathbb{Z}_p$ is called a $p^\beta$-Lipschitz function if for every $\boldsymbol{x},\boldsymbol{y}\in\mathbb{Z}_p^n$,
		\[|F(\boldsymbol{x})-F(\boldsymbol{y})|_p\leq p^\beta||\boldsymbol{x}-\boldsymbol{y}||_p.\]
	\end{definition}
	When $\beta=0$, a $p^\beta$-Lipschitz function is just called a $1$-Lipschitz function. In fact, the study of continuous functions in several variables is only mentioned (to the best of our knowledge) in the works of Anashin in \cites{An94,An02,An-Kh_Book}. There, the author studies the class of functions that are differentiable modulo $p^k$, which attempts to approximate successive differentiability in each co-class modulo $p^k$. Then the author gives some characterizations of measure preserving and ergodic functions in terms of the coefficients of the Mahler base of functions $C(\mathbb{Z}_p^n\to \mathbb{Q}_p)$ under the $1$-Lipschitz condition. A natural question that we would like to address in a near future is to provide, or generalize, some of these characterizations but in terms of the van der Put coefficients and furthermore for a class of functions satisfying a more general Lipschitz condition that we next describe.
	
	Recall that for $\boldsymbol{x},\boldsymbol{y}\in\mathbb{Z}_p^n$, $||\boldsymbol{x}-\boldsymbol{y}||_{p}:=\max_{1\leq i\leq n}|x_{i}-y_i|_{p}$, then for a $p^\beta$-Lipschitz function we have 
	\[|F(\boldsymbol{x})-F(\boldsymbol{y})|_p\leq p^\beta\max_{1\leq i\leq n}|x_{i}-y_i|_{p}.\]
	This last inequality motivate the definition of the following Lipschitz condition. 
	\begin{definition}\label{Def:New_p_alpha}
		Take $\boldsymbol{\alpha}=(\alpha_1,\ldots,\alpha_n)\in\mathbb{Z}_{\geq 0}^n$. A function $F: \mathbb{Z}_p^n\to \mathbb{Z}_p$ is called a $p^{\boldsymbol{\alpha}}$-Lipschitz function if for every $\boldsymbol{x},\boldsymbol{y}\in\mathbb{Z}_p^n$,
		\[|F(\boldsymbol{x})-F(\boldsymbol{y})|_p\leq \max_{1\leq i\leq n}\{p^{\alpha_i}|x_i-y_i|_p\}.\]
	\end{definition} 
	
	\begin{example}
		\begin{enumerate}
			\item The function $F(x,y)=\frac{x-x^p}{p}+y$ is a $p^{(1,0)}$-Lipschitz function.
			\item For $i\in\{1,\ldots,n\}$, let $f_i(x_i)$ be a function in the class $\mathit{Lip}_{\alpha_i}$. Define $F(x_1,\ldots,x_n)$ as \[F(x_1,\ldots,x_n)=f_1(x_1)+\cdots+f_n(x_n).\] Then $F(x_1,\ldots,x_n)$ is a $p^{(\alpha_1,\ldots,\alpha_n)}$-Lipschitz function.
		\end{enumerate}
	\end{example}
	\begin{remark}
		Note that when $\boldsymbol{\alpha}=(\alpha_1,\ldots,\alpha_n)=(\beta,\ldots,\beta)$ with $\beta\in\mathbb{Z}_{\geq 0}$, a $p^{\boldsymbol{\alpha}}$-Lipschitz function is also a $p^\beta$-Lipschitz function. The Lipschitz condition of  Definition \ref{Def:New_p_alpha} can be naturally interpreted as a $p$-adic weighted Lipschitz condition with weight $\boldsymbol{\alpha}=(\alpha_1,\ldots,\alpha_n)\in\mathbb{Z}_{\geq0}^n$.
	\end{remark}
	The next result may be considered as a generalization of  \cite{An94}*{Prop. 1.4} for the case of $p^{\boldsymbol{\alpha}}$-Lipschitz functions, we need first some notation. For a function 
	\begin{center}
		\begin{tabular}{c c l}
			$F:$& $\mathbb{Z}_p^n$ & $\longrightarrow\ \mathbb{Z}_p$ \\
			& $\boldsymbol{x}=(x_1,\ldots,x_n)$ & $\longmapsto\ F(\boldsymbol{x}),$ \\
		\end{tabular}
	\end{center}
	and a fixed index $l\in\{1,\ldots,n\}$, we denote by $F_l(z)$ the projection function
	\begin{equation}\label{Eqn:Auxiliar_functions}
		\begin{tabular}{c c l}
			$F_l(z):$& $\mathbb{Z}_p$ & $\longrightarrow\ \mathbb{Z}_p$ \\
			& $z$ & $\longmapsto\ F_l(z)=F(x_1,\ldots,x_{l-1},z,x_{l+1},\ldots,x_n).$ 
		\end{tabular}
	\end{equation}
	
	\begin{proposition}\label{Prop:Mult_to_univ}
		Let $F:\mathbb{Z}_p^n\to\mathbb{Z}_p$ be a $p^{\boldsymbol{\alpha}}$-Lipschitz function, then the univariate function $F_l(z)\in\mathit{Lip}_{\alpha_l}$. Reciprocally, if $F:\mathbb{Z}_p^n\to\mathbb{Z}_p$ is a continuous function and if $F_l(z_l)\in\mathit{Lip}_{\alpha_l}$, for every $l\in\{1,\ldots,n\}$, then $F(\boldsymbol{x})$ is a $p^{(\alpha_1,\ldots,\alpha_n)}$-Lipschitz function.
	\end{proposition}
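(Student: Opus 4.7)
The plan is to prove the two implications separately. The forward direction is essentially immediate from the definition, while the reverse direction will rely on a telescoping decomposition together with the ultrametric property of $|\cdot|_p$.

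For the forward direction, I fix an index $l$ and any choice of the remaining coordinates $x_1,\ldots,x_{l-1},x_{l+1},\ldots,x_n\in\mathbb{Z}_p$. Given $z,w\in\mathbb{Z}_p$, I apply the $p^{\boldsymbol{\alpha}}$-Lipschitz condition of $F$ to the points $\boldsymbol{x}=(x_1,\ldots,x_{l-1},z,x_{l+1},\ldots,x_n)$ and $\boldsymbol{y}=(x_1,\ldots,x_{l-1},w,x_{l+1},\ldots,x_n)$. Since these differ only in the $l$-th coordinate, the right hand side of Definition \ref{Def:New_p_alpha} collapses to $p^{\alpha_l}|z-w|_p$, yielding $|F_l(z)-F_l(w)|_p\leq p^{\alpha_l}|z-w|_p$, i.e. $F_l\in\mathit{Lip}_{\alpha_l}$.

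For the reverse direction, given $\boldsymbol{x}=(x_1,\ldots,x_n)$ and $\boldsymbol{y}=(y_1,\ldots,y_n)$ in $\mathbb{Z}_p^n$, I use the telescoping identity
\[
F(\boldsymbol{x})-F(\boldsymbol{y})=\sum_{i=1}^{n}\bigl[F(y_1,\ldots,y_{i-1},x_i,x_{i+1},\ldots,x_n)-F(y_1,\ldots,y_{i-1},y_i,x_{i+1},\ldots,x_n)\bigr].
\]
Each summand modifies a single coordinate, namely the $i$-th, so it equals $F_i(x_i)-F_i(y_i)$ for the appropriate choice of frozen variables, and by the hypothesis $F_i\in\mathit{Lip}_{\alpha_i}$ it is bounded in $|\cdot|_p$ by $p^{\alpha_i}|x_i-y_i|_p$. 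Applying the ultrametric inequality to the telescoping sum gives
\[
|F(\boldsymbol{x})-F(\boldsymbol{y})|_p\leq\max_{1\leq i\leq n}\bigl\{p^{\alpha_i}|x_i-y_i|_p\bigr\},
\]
which is exactly the $p^{\boldsymbol{\alpha}}$-Lipschitz condition.

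There is no serious obstacle here; the only delicate point is to make sure that, at step $i$ of the telescoping, the univariate function whose increment is being bounded is genuinely of the form $F_i(\cdot)$ with the other coordinates frozen (some as $y_j$ for $j<i$ and some as $x_j$ for $j>i$), so that the hypothesis $F_i\in\mathit{Lip}_{\alpha_i}$, which must hold for every choice of the frozen variables, may be applied. Once this is noted, the ultrametric inequality does the rest.
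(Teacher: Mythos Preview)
Your proof is correct and follows essentially the same approach as the paper: the forward direction is obtained by specializing the $p^{\boldsymbol{\alpha}}$-Lipschitz inequality to points differing in a single coordinate, and the reverse direction uses a telescoping decomposition together with the ultrametric inequality. The only cosmetic difference is that the paper writes out the $n=2$ case explicitly and then invokes induction on $n$, whereas you give the full telescoping sum for general $n$ at once; the underlying idea is identical.
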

	\begin{proof}
		For the first part, assume that $z\equiv w \bmod{p^k}$, then $|z-w|_p\leq p^{-k}$ and $||(x_1,\ldots,x_{l-1},z,x_{l+1},\ldots,x_n)-(x_1,\ldots,x_{l-1},w,x_{l+1},\ldots,x_n)||_p\leq p^{-k}$. By the $p^{\boldsymbol{\alpha}}$-Lipschitz condition on $F$
		\[|F_l(z)-F_l(w)|_p\leq p^{\alpha_l-k},\] which shows that $F_l(z)\in\mathit{Lip}_{\alpha_l}$.
		
		For the second part we first consider the case $n=2$. Note that for every fixed $x_2\in\mathbb{Z}_p$, one has
		\begin{equation}\label{Eq:1Prop4.1}
			|F(z_1,x_2)-F(w_1,x_2)|_p=|F_1(z_1)-F_1(w_1)|_p\leq p^{\alpha_1}|z_1-w_1|_p,
		\end{equation}
		while for every fixed $x_1\in\mathbb{Z}_p$ 
		\begin{equation}\label{Eq:2Prop4.1}
			|F(x_1,z_2)-F(x_1,w_2)|_p=|F_2(z_2)-F_2(w_2)|_p\leq p^{\alpha_2}|z_2-w_2|_p.
		\end{equation}
		Replacing $x_2$ by $z_2$ in \eqref{Eq:1Prop4.1} and $x_1$ by $z_1$ in \eqref{Eq:2Prop4.1}, one gets
		\begin{gather*}
			|F(z_1,z_2)-F(w_1,w_2)|_p=|F(z_1,z_2)-F(w_1,z_2) +F(w_1,z_2)-F(w_1,w_2)|_p\\ 
			\leq \max\{|F(z_1,z_2)-F(w_1,z_2)|_p,|F(w_1,z_2)-F(w_1,w_2)|_p\}\\
			\leq \max\{p^{\alpha_1}|z_1-w_1|_p,p^{\alpha_2}|z_2-w_2|_p\}.
		\end{gather*}
		The general case follows by induction on $n$.
	\end{proof}
	In particular, one has that if $F$ is a $p^\beta$-Lipschitz function, each $F_l$ in \eqref{Eqn:Auxiliar_functions} belongs to $\mathit{Lip}_\beta$. And if each projection $F_l$ belongs to $\mathit{Lip}_\beta$, then $F$ is a $p^\beta$-Lipschitz function.
	
	Now, for a given $\boldsymbol{\alpha}=(\alpha_1,\ldots,\alpha_n)\in\mathbb{Z}_{\geq 0}^n$, let $N_{\boldsymbol{x},\boldsymbol{y}}(\boldsymbol{\alpha}):=N\in\{1,2,\ldots,n\}$ be the index such that $p^{\alpha_{N}}|x_{N}-y_{N}|_p=\max_{1\leq i\leq n}\{p^{\alpha_i}|x_i-y_i|_p\}$, then the analogue of property \eqref{Eq:p_alpha_equivalence}, can be stated as follows. $F:\mathbb{Z}_p^n\to\mathbb{Z}_p$  is a $p^{\boldsymbol{\alpha}}$-Lipschitz function if and only if
	\begin{equation*}
		x_{N}\equiv y_{N} \bmod{p^k} \text{ implies } F(\boldsymbol{x})\equiv F(\boldsymbol{y}) \bmod{p^{k-\alpha_{N}}}, \text{ for } k\geq 1+\alpha_{N}.
	\end{equation*}
	It is also possible in the $n$-dimensional case to give an estimation of the coefficients of $p^{\boldsymbol{\alpha}}$-Lipschitz functions in terms of van der Put expansions.
	\begin{theorem}\label{Thm:alpha_lip_to_vdPut_multivar}
		Let $F(\boldsymbol{x})=\sum_{\boldsymbol{m}\in \mathbb{Z}_{\geq 0}^n}A_{\boldsymbol{m}}E_{\boldsymbol{m}}(\boldsymbol{x})$ be a continuous function from $\mathbb{Z}_p^n$ to $\mathbb{Z}_p$. If $F$ is a $p^{\boldsymbol{\alpha}}$-Lipschitz function then 
		\[|A_{m_{1},\ldots,m_{n}}|_p\leq p^{\min\{-\lfloor \log_p m_{i_1}\rfloor+\alpha_{i_1},\ldots, -\lfloor \log_p m_{i_k}\rfloor+\alpha_{i_k}\}}=p^{\min_{i\in I(\boldsymbol{m})}\{-\lfloor \log_p m_i\rfloor+\alpha_i\}},\] for every $\boldsymbol{m}=(m_{1},\cdots,m_{n})\in \mathbb{Z}_{\geq 0}^n$.
	\end{theorem}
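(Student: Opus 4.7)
The plan is to use the explicit formula for the van der Put coefficients given by Theorem \ref{Thm:vdPut_multi}, namely $A_{\boldsymbol{m}} = \phi_k(\ldots, \widehat{m}_{i_k}, \ldots)$ for $I(\boldsymbol{m}) = \{i_1, \ldots, i_k\}$, and then control the alternating sum using the univariate Lipschitz estimates provided by Proposition \ref{Prop:Mult_to_univ}. If $I(\boldsymbol{m}) = \emptyset$ there is nothing to prove since $A_{\boldsymbol{m}} = F(m_1, \ldots, m_n) \in \mathbb{Z}_p$, so I fix $i_\ast \in I(\boldsymbol{m})$ and aim to show $|A_{\boldsymbol{m}}|_p \leq p^{\alpha_{i_\ast} - \lfloor \log_p m_{i_\ast} \rfloor}$; taking the minimum over $i_\ast \in I(\boldsymbol{m})$ will finish the proof.

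The key structural observation is that, by Remark \ref{Rmk1}(2), the order in which the recursive differences defining $\phi_k$ are taken is immaterial. Hence, for any chosen $i_\ast \in I(\boldsymbol{m})$, I may arrange the construction so that the outermost difference is taken in the $i_\ast$-th slot, writing
\[
A_{\boldsymbol{m}} \;=\; G(m_{i_\ast}) - G(m_{i_\ast}^\ast),
\]
where $G(z)$ is the alternating sum (over $2^{k-1}$ sign choices $\varepsilon$) of values $F(\ldots, m_{i_j}^{\star \varepsilon_j}, \ldots, z, \ldots)$ with $z$ in the $i_\ast$-th slot and the other indexed coordinates $m_{i_j}$ replaced by either $m_{i_j}$ or $m_{i_j}^\ast$ according to $\varepsilon$.

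Now comes the crux. For each sign choice $\varepsilon$, the univariate function $z \mapsto F(\ldots, m_{i_j}^{\star \varepsilon_j}, \ldots, z, \ldots)$ is precisely the projection $F_{i_\ast}(z)$ of Proposition \ref{Prop:Mult_to_univ} applied with the other coordinates fixed, hence it lies in $\mathit{Lip}_{\alpha_{i_\ast}}$. Therefore each individual bracketed difference $F(\ldots, m_{i_\ast}, \ldots) - F(\ldots, m_{i_\ast}^\ast, \ldots)$ is bounded in $p$-adic norm by $p^{\alpha_{i_\ast}} |m_{i_\ast} - m_{i_\ast}^\ast|_p$. By the definition of $m^\ast$ (Definition \ref{def:m_star}), we have $|m_{i_\ast} - m_{i_\ast}^\ast|_p = p^{-\lfloor \log_p m_{i_\ast}\rfloor}$. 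Applying the ultrametric inequality to the alternating sum $G(m_{i_\ast}) - G(m_{i_\ast}^\ast)$, grouped so that each pair shares all coordinates except the $i_\ast$-th, yields
\[
|A_{\boldsymbol{m}}|_p \;\leq\; p^{\alpha_{i_\ast} - \lfloor \log_p m_{i_\ast}\rfloor}.
\]
Since $i_\ast \in I(\boldsymbol{m})$ was arbitrary, taking the minimum gives the claim.

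The main obstacle is the bookkeeping for the rearrangement in the second step: one must verify that the $\phi_k$ construction really is symmetric under permutation of $i_1, \ldots, i_k$ (so that any fixed coordinate can serve as the outer split), and that after this rearrangement the remaining $2^{k-1}$-term alternating sum still groups cleanly into $2^{k-1}$ pairwise differences in the $i_\ast$-th variable. Once the symmetry is invoked via Remark \ref{Rmk1}(2), the rest is a straightforward application of the ultrametric inequality together with the univariate Lipschitz bound provided by Proposition \ref{Prop:Mult_to_univ}.
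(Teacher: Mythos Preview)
Your proposal is correct and follows essentially the same approach as the paper's proof: both express $A_{\boldsymbol{m}}$ as the outermost difference $\phi_{k-1}(\ldots,m_{i_\ast},\ldots)-\phi_{k-1}(\ldots,m_{i_\ast}^\ast,\ldots)$, invoke Proposition~\ref{Prop:Mult_to_univ} to get the univariate Lipschitz bound in the $i_\ast$-th slot, and then use Remark~\ref{Rmk1}(2) to permute which coordinate plays the role of $i_\ast$ before taking the minimum. Your write-up is in fact slightly more explicit than the paper's in spelling out why the $\phi_{k-1}$ difference obeys the $\mathit{Lip}_{\alpha_{i_\ast}}$ bound (namely, by grouping the $2^{k-1}$ terms into pairs differing only in the $i_\ast$-th coordinate and applying the ultrametric inequality), whereas the paper simply cites ``the fact that $F_{i_k}\in\mathit{Lip}_{\alpha_{i_k}}$'' and leaves that grouping implicit.
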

	\begin{proof}
		By the definition of $A_{m_{1},\ldots,m_{n}}$ and the fact that $F_{i_k}\in \mathit{Lip}_{\alpha_{i_k}}$ we have
		\begin{gather*}
			|A_{m_{1},\ldots,m_{n}}|_p\\
			=|\phi_{k-1}(\ldots,\widehat{m}_{i_{1}},\ldots,\widehat{m}_{i_{k-1}},\ldots,m_{i_k},\ldots)
			-\phi_{k-1}(\ldots,\widehat{m}_{i_{1}},\ldots,\widehat{m}_{i_{k-1}},\ldots,m_{i_k}^\ast,\ldots){\tiny }|_p\\
			\leq p^{-\lfloor \log_p m_{i_k}\rfloor+\alpha_{i_k}}.
		\end{gather*}
		Now, from the second part of Remark \ref{Rmk1} we know that we may choose another equivalent set of auxiliary functions $\Phi_l$. In particular if we stick to the choice given there, we would arrive to an equality of the form
		\begin{gather*}
			A_{m_{1},\ldots,m_{n}}=\Phi_{k}(\ldots,\widehat{m}_{i_{1}},\ldots,\widehat{m}_{i_{k}},\ldots)\\
			=\Phi_{k-1}(\ldots,m_{i_1},\ldots,\widehat{m}_{i_{2}},\ldots,\widehat{m}_{i_{k}},\ldots)
			-\Phi_{k-1}(\ldots,m_{i_1}^\ast,\ldots,\widehat{m}_{i_{2}},\ldots,\widehat{m}_{i_{k}},\ldots),
		\end{gather*}
		from which $|A_{m_{1},\ldots,m_{n}}|_p\leq p^{-\lfloor \log_p m_{i_{1}}\rfloor+\alpha_{i_{1}}}$. In general we will have
		\[|A_{m_{1},\ldots,m_{n}}|\leq p^{\min\{-\lfloor \log_p m_{i_1}\rfloor+\alpha_{i_1},\ldots, -\lfloor \log_p m_{i_k}\rfloor+\alpha_{i_k}\}}.\]
	\end{proof}
	\begin{corollary}\label{Coro}
		When $F(\boldsymbol{x})=\sum_{\boldsymbol{m}\in \mathbb{Z}_{\geq 0}^n}A_{\boldsymbol{m}}E_{\boldsymbol{m}}(\boldsymbol{x})$, is a $p^{\boldsymbol{\alpha}}$-Lipschitz function, then 
		\[a_{m_{1},\ldots,m_{n}}:=p^{\min\{-\lfloor \log_p m_{i_1}\rfloor+\alpha_{i_1},\ldots, -\lfloor \log_p m_{i_k}\rfloor+\alpha_{i_k}\}}A_{m_{1},\ldots,m_{n}}\in\mathbb{Z}_p.\]
		Equivalently 
		\begin{align*}
			A_{m_{1},\ldots,m_{n}}&= p^{-\min\{-\lfloor \log_p m_{i_1}\rfloor+\alpha_{i_1},\ldots, -\lfloor \log_p m_{i_k}\rfloor+\alpha_{i_k}\}}a_{m_{1},\ldots,m_{n}}\\
			&=p^{\max\{\lfloor \log_p m_{i_1}\rfloor-\alpha_{i_1},\ldots, \lfloor \log_p m_{i_k}\rfloor-\alpha_{i_k}\}}a_{m_{1},\ldots,m_{n}},
		\end{align*}
		for some $a_{m_{1},\ldots,m_{n}}\in\mathbb{Z}_p$.
	\end{corollary}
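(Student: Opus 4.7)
The plan is to read this off directly from Theorem \ref{Thm:alpha_lip_to_vdPut_multivar}; the corollary is essentially a bookkeeping statement about normalising the van der Put coefficients of a $p^{\boldsymbol{\alpha}}$-Lipschitz function so they land in $\mathbb{Z}_p$. First I would set $c_{\boldsymbol{m}} := \min\bigl\{-\lfloor \log_p m_{i_1}\rfloor+\alpha_{i_1},\ldots, -\lfloor \log_p m_{i_k}\rfloor+\alpha_{i_k}\bigr\}$, so that Theorem \ref{Thm:alpha_lip_to_vdPut_multivar} gives $|A_{m_1,\ldots,m_n}|_p\le p^{c_{\boldsymbol{m}}}$ for every $\boldsymbol{m}\in\mathbb{Z}_{\ge0}^n$ with $I(\boldsymbol{m})\neq\emptyset$. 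The definition $a_{m_1,\ldots,m_n}:=p^{c_{\boldsymbol{m}}}A_{m_1,\ldots,m_n}$ then yields
\[
|a_{m_1,\ldots,m_n}|_p \;=\; |p^{c_{\boldsymbol{m}}}|_p\,|A_{m_1,\ldots,m_n}|_p \;\le\; p^{-c_{\boldsymbol{m}}}\cdot p^{c_{\boldsymbol{m}}} \;=\; 1,
\]
so $a_{m_1,\ldots,m_n}\in\mathbb{Z}_p$, which is the first assertion.

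For the equivalent formulation I would simply solve the defining relation for $A_{m_1,\ldots,m_n}$, getting $A_{m_1,\ldots,m_n}=p^{-c_{\boldsymbol{m}}}a_{m_1,\ldots,m_n}$, and then convert the exponent via the elementary identity $-\min\{x_1,\ldots,x_k\}=\max\{-x_1,\ldots,-x_k\}$ applied with $x_j=-\lfloor\log_p m_{i_j}\rfloor+\alpha_{i_j}$. This immediately rewrites the exponent as $\max\{\lfloor\log_p m_{i_1}\rfloor-\alpha_{i_1},\ldots,\lfloor\log_p m_{i_k}\rfloor-\alpha_{i_k}\}$, matching the stated form.

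The only thing to be a little careful about is the edge case $I(\boldsymbol{m})=\emptyset$ (i.e.\ all $m_i<p$), for which the minimum is an empty one and the theorem gives no new information; here $A_{m_1,\ldots,m_n}=F(m_1,\ldots,m_n)\in\mathbb{Z}_p$ by the last part of Theorem \ref{Thm:vdPut_multi} together with the assumption that $F$ takes values in $\mathbb{Z}_p$, so the convention $c_{\boldsymbol{m}}=0$ makes the same formula trivially valid. There is no real obstacle in this argument: it is a direct rescaling of the bound in Theorem \ref{Thm:alpha_lip_to_vdPut_multivar} combined with the min--max duality, and the only point that requires a moment's attention is keeping track of signs so that the power of $p$ used to normalise matches the one appearing in the statement.
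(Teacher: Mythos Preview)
Your proposal is correct and matches the paper's approach: the paper states this corollary without proof, treating it as an immediate consequence of Theorem~\ref{Thm:alpha_lip_to_vdPut_multivar}, and your argument is precisely the direct rescaling plus the min--max identity that makes this explicit. Your handling of the edge case $I(\boldsymbol{m})=\emptyset$ is a nice clarification that the paper leaves implicit.
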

	In particular, when $F$ is a $p^\beta$-Lipschitz function then for every $\boldsymbol{m}=(m_{1},\cdots,m_{n})\in \mathbb{Z}_{\geq 0}^n$
	\[|A_{m_{1},\ldots,m_{n}}|\leq p^{\min\{-\lfloor \log_p m_1\rfloor,\ldots, -\lfloor \log_p m_n\rfloor\}+\beta}
	\]
	Moreover, when $F$ is a $1$-Lipschitz function, one may assume that 
	\[A_{m_{1},\ldots,m_{n}}=p^{\max\{\lfloor \log_p m_1\rfloor,\ldots, \lfloor \log_p m_n\rfloor\}}a_{m_{1},\ldots,m_{n}}\,
	\]
	for some $a_{m_{1},\ldots,m_{n}}\in\mathbb{Z}_p$. 
	
	\subsection{Multivariate Hensel's Lemma}
	Finally we present a version of Hensel's lifting Lemma for functions of several variables, generalizing thus Theorem \ref{Thm:Hensel_alpha_Univ}.
	\begin{theorem}\label{Thm:Main}
		Let $F: \mathbb{Z}_p^n\to\mathbb{Z}_p$ be a $p^{\boldsymbol{\alpha}}$-Lipschitz function, represented via van der Put series as 
		\[F(\boldsymbol{x})=\sum_{m_1\geq 0}
		\cdots\sum_{m_n\geq 0} p^{\max\{\lfloor \log_p m_1\rfloor -\alpha_1,\ldots, \lfloor \log_p m_n\rfloor -\alpha_n\}}a_{m_{1},\ldots,m_{n}}E_{m_1,\ldots,m_n}(\boldsymbol{x}).
		\]
		\begin{enumerate}
			\item The function $F$ has a root in $\mathbb{Z}_p^n$ if and only if there exist at least one index $j\in\{1,\ldots,n\}$, such that the projection function $F_j(z)$ (defined in \ref{Eqn:Auxiliar_functions}) has a root.
			\item Let $l_0$ be a positive integer and let $\boldsymbol{z}=(z_1,\ldots,z_n)\in\mathbb{Z}^n$ with $0\leq z_k< p^{l_0+\alpha_k}$ for $k=1,\ldots,n$ and satisfying 
			\[F(\boldsymbol{z})\equiv 0 \bmod{p^{l_0+\min\{\alpha_1,\ldots,\alpha_n\}}}.\] 
			Consider a non negative integer $l$ with $l\geq l_0+\max\{\alpha_1,\ldots,\alpha_n\}$. Set also $\boldsymbol{m}=(m_{1},\ldots,m_{n})\in \mathbb{Z}_{\geq 0}^n$ satisfying $m_i< p^{l}$ for $i=1,\ldots,n$ and $m_i\equiv z_i \bmod{p^{l_0+\alpha_i}}$. For  $l$ and $\boldsymbol{m}$ as above assume that there exists at least one index $j$ for which
			\begin{gather*}
				\left\{p^{-\alpha_j}\overline{\phi_{1}(m_1,\ldots,m_{j-1},\widehat{m_j+r p^l},m_{j+1},\ldots,m_{n})}\ ;\ r=1,2,\ldots, p-1\right\}\\
				=\{1,2,\ldots, p-1\}.
			\end{gather*} 
			Then there exists a unique  $\boldsymbol{\zeta}\in \mathbb{Z}_p$ such that $F(\boldsymbol{\zeta})=0$ and $\zeta_k\equiv z_k \bmod{p^{l_0+\alpha_k}}$, for every $k\in\{1,\ldots,n\}$.
		\end{enumerate}
	\end{theorem}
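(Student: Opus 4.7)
Part (1) follows directly from the definition of the projection functions in \eqref{Eqn:Auxiliar_functions}: if $F(\boldsymbol{\zeta})=0$ then fixing the remaining entries to $\zeta_k$ makes the projection $F_j$ vanish at $\zeta_j$, and conversely a zero of any such projection yields a zero of $F$ by substitution. The content of the theorem is Part (2), and the plan is to imitate the Hensel lifting in the proof of Theorem \ref{Thm:Hensel_alpha_Univ}. I would construct inductively a Cauchy sequence $\{\boldsymbol{z}^{(l)}\}_{l\geq l_0+\max\{\alpha_i\}}$ in $\mathbb{Z}_p^n$ satisfying $F(\boldsymbol{z}^{(l)})\equiv 0\bmod p^l$, $z_k^{(l)}\equiv z_k \bmod p^{l_0+\alpha_k}$ for every $k$, and $\boldsymbol{z}^{(l+1)}\equiv \boldsymbol{z}^{(l)}\bmod p^l$ componentwise; its limit in the complete space $(\mathbb{Z}_p^n,\|\cdot\|_p)$ is the desired root $\boldsymbol{\zeta}$.

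For the inductive step at level $l$, I would write $F(\boldsymbol{z}^{(l)})=p^l t$ with $t\in\mathbb{Z}_p$, invoke the hypothesis at $\boldsymbol{m}=\boldsymbol{z}^{(l)}$ to extract a good index $j$, and seek a lift of the form $\boldsymbol{z}^{(l+1)}=\boldsymbol{z}^{(l)}+rp^l\boldsymbol{e}_j$ with $r\in\{0,1,\ldots,p-1\}$. Since $l\geq l_0+\max\geq l_0+\alpha_j$, the increment $rp^l$ lies in $p^{l_0+\alpha_j}\mathbb{Z}$ and the target congruence in every coordinate is preserved. The key identity is
\[F(\boldsymbol{z}^{(l+1)})-F(\boldsymbol{z}^{(l)})=\phi_1\bigl(\ldots,\widehat{z_j^{(l)}+rp^l},\ldots\bigr),\]
which equates the increment of $F$ with the van der Put coefficient of the projection $F_j$ at index $z_j^{(l)}+rp^l$. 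Consequently $F(\boldsymbol{z}^{(l+1)})\equiv 0\bmod p^{l+1}$ reduces, after the rescaling introduced in Corollary \ref{Coro}, to $p^{-\alpha_j}\overline{\phi_1}+\bar t\equiv 0\bmod p$, which has a unique solution $r$ by the bijectivity hypothesis, exactly as in the univariate proof.

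The main technical obstacle I foresee is producing the base case $\boldsymbol{z}^{(l_0+\max)}$, since the given datum is $F(\boldsymbol{z})\equiv 0\bmod p^{l_0+\min\{\alpha_i\}}$ and this congruence is $\max-\min$ digits short of what the outer induction needs to start. My plan for closing this gap is to first run an auxiliary lift that modifies only a coordinate $j_0$ with $\alpha_{j_0}=\min\{\alpha_i\}$: here the smaller step $p^{l_0+\min}$ is compatible with the constraint $z_{j_0}\equiv z_{j_0}\bmod p^{l_0+\alpha_{j_0}}$, and by Proposition \ref{Prop:Mult_to_univ} the projection $F_{j_0}$ lies in $\mathit{Lip}_{\alpha_{j_0}}$ with starting congruence $F_{j_0}(z_{j_0})\equiv 0\bmod p^{l_0+\alpha_{j_0}}$, so the univariate Theorem \ref{Thm:Hensel_alpha_Univ} part (2) can be invoked provided the required bijectivity at intermediate levels $l_0+\min\leq l<l_0+\max$ can be extracted from specialisations of the multivariate hypothesis. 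Once the base case is in place the outer induction runs without further obstruction; $\boldsymbol{\zeta}$ exists by completeness, $F(\boldsymbol{\zeta})=0$ follows from the continuity of $F$, and uniqueness of $\boldsymbol{\zeta}$ under the prescribed residues follows from the uniqueness of $r$ at each lifting step.
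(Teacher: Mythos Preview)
Your inductive lifting step in Part~(2) is exactly the paper's argument: the paper also writes $F(\boldsymbol{m}+rp^l\boldsymbol{e}_j)=F(\boldsymbol{m})+\phi_1(\ldots,\widehat{m_j+rp^l},\ldots)$, divides by $p^l$, and solves $\bar t+p^{-\alpha_j}\overline{\phi_1}\equiv 0\bmod p$ via the bijectivity hypothesis. Your treatment of Part~(1) is likewise adequate and in fact more transparent than the paper's appeal to \cite{Yu-Kh_JNT} and Proposition~\ref{Prop:Mult_to_univ}, since the equivalence is essentially definitional.

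Where you go beyond the paper is in flagging the base case: the starting datum is $F(\boldsymbol{z})\equiv 0\bmod p^{l_0+\min_i\alpha_i}$, but the lifting machinery only applies at levels $l\geq l_0+\max_i\alpha_i$, leaving $\max-\min$ digits unaccounted for. The paper simply writes ``Assume that $\boldsymbol{m}$ is a root of $F$ modulo $p^l$'' and proceeds, so this discrepancy is not addressed there either. Your instinct to worry about it is correct.

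However, your proposed fix does not close the gap. You want to run an auxiliary lift along a coordinate $j_0$ with $\alpha_{j_0}=\min_i\alpha_i$ at the intermediate levels $l_0+\min\leq l<l_0+\max$, but the bijectivity hypothesis in the theorem is stated \emph{only} for $l\geq l_0+\max_i\alpha_i$; nothing is assumed below that threshold, so there is no hypothesis to ``extract by specialisation''. In the constant case $\alpha_1=\cdots=\alpha_n$ the gap vanishes (since $\min=\max$) and both your argument and the paper's are complete; for genuinely distinct $\alpha_i$, the statement as written appears to require either strengthening the initial congruence to $F(\boldsymbol{z})\equiv 0\bmod p^{l_0+\max_i\alpha_i}$ or extending the bijectivity hypothesis down to $l\geq l_0+\min_i\alpha_i$ (with the index $j$ then restricted to those with $\alpha_j\leq l-l_0$). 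You have not introduced an error that the paper avoids---you have located one that the paper shares.
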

	
	\begin{proof}
		The proof of the first part is an easy variation of the proof of \cite{Yu-Kh_JNT}*{Theorem 2.1}, taking into account Proposition \ref{Prop:Mult_to_univ}. For the proof of the second part we will lift the root $\boldsymbol{z}\in\mathbb{Z}_{\geq 0}$  to a root $\boldsymbol{\zeta}=(\zeta_1,\ldots,\zeta_n)\in\mathbb{Z}_p^n$ of the function $F$. Assume that $\boldsymbol{m}=(m_1,\ldots,m_n)$ is a root of $F$ modulo $p^l$ such that for every $i=1,\ldots,n$ one has $m_i<p^l$ and moreover   
		$m_i\equiv z_i \bmod{p^{l_0+\alpha_i}}$. 
		
		We will follow the idea of \cite{Bor-Sha}*{Thm. 3, Sec. 5.2}, to show that	there exists some $r\in\{1,\ldots, p -1\}$ with
		\begin{equation*}
			F(m_1,\ldots,m_{j-1},m_{j}+r\cdot p^l,m_{j+1},\ldots,m_n)\equiv 0 \bmod{p^{l+1}}.
		\end{equation*}
		Without loss of generality we will assume that $j=i_1=1$. Now, from the proof of Theorem \ref{Thm:vdPut_multi} we know that
		\begin{gather*}
			F(m_1+r p^l,m_2,\ldots,m_n)=F(m_1,m_2,\ldots,m_n)+\sum_{j_{2}\vartriangleleft m_{2}}\cdots\sum_{j_{n}\vartriangleleft m_{n}}A_{m_1+r p^l,j_2,\ldots,j_{n}}\\
			=F(m_1,m_2,\ldots,m_n)+\phi_{1}(\widehat{m_1+r p^l},m_2,\ldots,m_{n}),
		\end{gather*}
		which implies that we want  $r\in\{1,\ldots, p -1\}$ verifying
		\begin{gather}\label{Eq:ProofMainThm2}
			F(m_1,m_2,\ldots,m_n)+\phi_{1}(\widehat{m_1+r p^l},m_2,\ldots,m_{n})\equiv 0 \bmod{p^{l+1}}.
		\end{gather}
		By hypothesis $F(\boldsymbol{m})\equiv 0 \bmod{p^{l}}$, i.e. $F(\boldsymbol{m})=p^lt$, for some $t\in\mathbb{Z}_p^\times$. On the other hand one has by construction that  $p^{l-\alpha_1}$ divides $\phi_{1}(\widehat{m_1+r p^l},m_2,\ldots,m_{n})$, using Proposition \ref{Prop:Mult_to_univ} and the fact that $F_1\in\mathit{Lip}_{\alpha_1}$.  Dividing by $p^{l}$ in \eqref{Eq:ProofMainThm2}, we reduce our task to find $r\in\{1,\ldots, p -1\}$ such that
		\[\bar{t}+p^{-\alpha_1}\overline{\phi_{1}(\widehat{m_1+r p^l},m_2,\ldots,m_{n})}\equiv 0 \bmod{p}.
		\]
		This is precisely the case under the hypothesis on the set 	
		\[\left\{p^{-\alpha_j}\overline{\phi_{1}(m_1,\ldots,m_{j-1},\widehat{m_j+r p^l},m_{j+1},\ldots,m_{n})}\ ;\ r=1,2,\ldots, p-1\right\}.
		\]
		
		Then we may repeat the same steps in order to lift the root  $(m_1+r p^l,m_2,\ldots,m_n)$ to a root $\boldsymbol{\mu}$ of F modulo $p^{l+2}$ with the property
		that $\mu_i\equiv m_i \bmod{p^{l}}$ for every $i=1,\ldots,n$. The sequence of liftings obtained this way converges to some  $\boldsymbol{\zeta}=(\zeta_1,\ldots,\zeta_n)\in\mathbb{Z}_p^n$ for which $F(\boldsymbol{\zeta})=0$.
	\end{proof}
	It would be nice to have a statement of Theorem \ref{Thm:Main} involving only the coefficients of the van der Put series of $f$.
	\bibliographystyle{plain}

\end{document}